\documentclass[12pt]{article}

\usepackage[a4paper,width=155mm,top=25mm,bottom=25mm]{geometry}
\newcommand{\undertitle}[1]{}
\usepackage{amsthm}
\usepackage{titling}
\newtheorem{theorem}{Theorem}[section]
\newtheorem{lemma}[theorem]{Lemma}

\newtheorem{definition}[theorem]{Definition}

\newtheorem{remark}[theorem]{Remark}
\usepackage{mathtools}
\usepackage{xspace} 
\usepackage{mathrsfs}
\usepackage{esint}
\usepackage{algorithm2e}
\usepackage{algorithmic}
\usepackage{graphicx}
\usepackage{caption}
\usepackage{subcaption}
\usepackage[justification=centering]{caption} % Add this in the preamble
\usepackage{bm}

\usepackage{amsmath} 
\usepackage[utf8]{inputenc} % allow utf-8 input
\usepackage[T1]{fontenc}    % use 8-bit T1 fonts
\usepackage{url}            % simple URL typesetting
\usepackage{booktabs}       % professional-quality tables
\usepackage{amsfonts}       % blackboard math symbols
\usepackage{nicefrac}       % compact symbols for 1/2, etc.
\usepackage[numbers]{natbib}
\usepackage{doi}
\usepackage{mathrsfs}

\def \Mvec{\mathbf{M}}

\def\Hvec{\mathbf{H}}

\graphicspath{{Pictures/}}

\usepackage{hyperref}       % hyperlinks
\usepackage{cleveref}

\title{Existence of homeomorphic minimizers via mappings of finite distortion in compressible magnetoelasticity}

\author{
Shilpa Dutta\thanks{Institute of Mathematics, 
University of W\"urzburg, Germany,\newline 
\href{mailto:shilpa.dutta@uni-wuerzburg.de}{\texttt{shilpa.dutta@uni-wuerzburg.de}}} ,\quad
Anja Schlömerkemper\thanks{Institute of Mathematics,
University of W\"urzburg, Germany,\newline
\href{mailto:anja.schloemerkemper@uni-wuerzburg.de}{\texttt{anja.schloemerkemper@uni-wuerzburg.de}}}
}

\begin{document}
\maketitle

\begin{abstract}
We establish the existence of an energy minimizer for a variational model of compressible magnetoelastic solids. The analysis is carried out in a new admissible class of deformations consisting of mappings of finite distortion, which extends previously available existence frameworks. A key ingredient is a compactness result under the critical integrability assumption on the outer distortion coefficient, which significantly weakens the regularity requirements imposed in earlier works.

To obtain this result, we prove a diameter estimate for finite-distortion mappings satisfying the Ciarlet-Ne\v{c}as condition and derive an open mapping theorem under the optimal integrability assumption, that is, the outer distortion is in $L^{n-1}$. This provides a partial positive result in the direction of the Iwaniec-\v{S}ver\'ak conjecture and implies that admissible deformations are homeomorphisms.

These topological and compactness properties allow us to apply the direct method of the calculus of variations and establish the existence of minimizers for compressible magnetoelastic solids within the admissible class of deformations consisting of mappings of finite distortions.
\end{abstract}
\textbf{\textit{Keywords:}}
Mappings of Finite Distortion, Homeomorphic Minimizer, Compressible Magnetoelastic Solids
\\[0.5em]
\textbf{\textit{MSCcodes:} Primary 49J27; Secondary 74F15}

\section{Introduction}\label{sec:1}
%Structure of the new version of the introduction: Magnetoelasticity and the variational problem; Existing existence results and their limitations; The finite-distortion framework and the Iwaniec-Šverák motivation; Main contributions of the paper; Outline of the article.

Magnetoelastic materials are ferromagnetic solids whose mechanical deformation and magnetic response are strongly coupled. Mechanical stresses may change the magnetic configuration of the material (inverse magnetostriction), while changes in magnetization may induce mechanical deformation (magnetostriction). Because of this two-way coupling, magnetoelastic materials play an important role in a variety of applications such as bioelectronic devices, sensors, actuators and shape memory alloys \cite{Nature, brown1966magnetoelastic}. In many applications, the material is subjected to an external magnetic field.  Ferromagnetic solids show a spontaneous magnetization $\Mvec$, whose magnitude $|\Mvec|$ depends on temperature.\newline\\
In this work, we consider a variational model for compressible magnetoelastic solids in the mixed Eulerian-Lagrangian formulation, which mainly follows \cite{brown1966magnetoelastic, hubert2008magnetic}.\newline 
Let $\Omega\subset\mathbb R^3$ denote the reference configuration of a magnetoelastic solid and let $\bm{y}: \Omega \rightarrow \mathbb R^3$ represent its deformation.
%such that $\bm{y}\left(X\right)=x$. 
The magnetization is defined on the deformed configuration, $\Mvec:  \bm{y}\left(\Omega\right)\rightarrow \mathbb R^3$. Following \cite{brown1966magnetoelastic, hubert2008magnetic, James, MK}, we consider the total magnetoelastic energy 
\begin{eqnarray}
\mathscr{E}_{mag} \left(\bm{y}, \Mvec\right)
&=& \int_\Omega \mathcal{W} \left(\nabla \bm{y}\left(X\right) , \Mvec\left(\bm{y}\left(X\right)\right)\right) dX + \alpha \int_{\bm{y}\left(\Omega\right)} |\nabla \Mvec\left(x\right)|^2dx \nonumber\\
&&+ \dfrac{\mu_0}{2} \int_{\mathbb R^3} |\Hvec_{\Mvec}\left(x\right) |^2dx.
\label{eq:en_mag}
\end{eqnarray}
\noindent The first term in the energy \eqref{eq:en_mag} is the magnetostrictive energy and describes the coupling between deformation and magnetization. The energy density $\mathcal{W}: \mathbb R^{3\times 3} \times \mathbb R^3 \rightarrow \left[0, \infty\right)$ accounts for material-dependent effects such as magnetocrystalline anisotropy and is generally nonconvex and nonlinear. In particular, it commonly contains terms that are quadratic in the magnetization components \cite{hubert2008magnetic}.
We assume the standard frame-indifference, i.e.,
\begin{align}
\mathcal{W}\left(\mathcal{R}\bm{F},\mathcal{R}\bm{\lambda} \right) = \mathcal{W}\left( \bm{F}, \bm{\lambda}\right) \text{ for any } \bm{F}\in \mathbb R^{3\times 3},~\bm{\lambda}\in\mathbb R^3, \mathcal{R} \in  SO(3).\nonumber
\end{align}
The second term in \eqref{eq:en_mag} is the exchange energy, which penalizes spatial variations of the magnetization and therefore promotes magnetic alignment. The material constant $\alpha>0$ is generally temperature dependent and known as the exchange constant \cite{hubert2008magnetic}. The third term is the magnetostatic or stray-field energy associated with the magnetic field generated by the magnetization itself. The corresponding magnetic field $\Hvec_{\Mvec}:\mathbb R^3\rightarrow\mathbb R^3$ is determined by the stationary Maxwell equations (see e.g.,\ \cite{James, MK})
\begin{gather}
\begin{aligned}
\textbf{div}\left(\chi_{\bm{y}\left(\Omega\right)}\Mvec + \mu_0\Hvec_{\Mvec}\right)&=0 \mbox{ in } \mathbb R^3,\\
\textbf{curl}~\Hvec_{\Mvec}&=0 \mbox{ in } \mathbb R^3.
\end{aligned}\label{eq:maxwell}
\end{gather}
Here $\chi_{\bm{y}\left(\Omega\right)}:\mathbb R^3\rightarrow [0, 1]$ is the characteristic function of $\bm{y}\left(\Omega\right)$ and the constant $\mu_0>0$ is the permeability of the vacuum \cite{hubert2008magnetic}.\\
The model is supplemented by standard assumptions from nonlinear elasticity and micromagnetics. In particular, admissible deformations are orientation preserving,
 $\det(\nabla \bm{y})>0$ almost everywhere in $\Omega$, which excludes interpenetration of matter. Moreover, following the compressible magnetoelastic theory of Brown and James-Kinderlehrer \cite{brown1966magnetoelastic,James,Luskin}, we impose the magnetic saturation constraint
\begin{eqnarray}\label{eq:sat1}
|\Mvec\left(\bm{y}\left(X\right)\right)|\det(\nabla \bm{y}\left(X\right)) = 1 \mbox{ a.e. in } \Omega.
\end{eqnarray}
This condition reflects the dependence of the magnetic moment density on local volume changes. In the incompressible case, for which $\det(\nabla \bm{y}\left(X\right))=1$, it reduces to the classical unit-length constraint
\begin{eqnarray}\label{eq:sat2}
|\Mvec\left(x\right)|=1 \mbox{ a.e. in } \bm{y}\left(\Omega\right).
\end{eqnarray}
\subsection{The state of the art}
\label{ch:literature review}
The existence theory for magnetoelastic energy functionals originates from the program of extending the direct methods of nonlinear elasticity \cite{ball1976convexity,ball1981null} to coupled magneto-mechanical models. Early contributions include the works of Rogers and coauthors \cite{rogers1986steady,rogers1988nonlocal,rogers1993existence}, where magnetic energies were reformulated in Lagrangian coordinates and strong constitutive assumptions were imposed on the elastic energy density.

In a mixed Eulerian--Lagrangian framework, existence results were first obtained by DeSimone and coauthors \cite{Desimmone} in two space dimensions. Later, James and Kinderlehrer \cite{Luskin} established the existence of minimizers in three dimensions under the physically relevant compressibility constraint \eqref{eq:sat1}. Their admissible class requires deformations in \(W^{2,2}\), and the higher regularity is used to preserve injectivity through arguments based on Banach's indicatrix.

For incompressible magnetoelastic solids, the authors in \cite{MK} proved existence of minimizers in the class \(W^{1,p}\), \(p>3\). The continuity obtained from Morrey's embedding simplifies the compactness analysis, while the saturation condition \eqref{eq:sat2} yields strong convergence properties for the magnetization-deformation composition. However, the compressible constraint \eqref{eq:sat1} is generally regarded as the more realistic physical assumption \cite{brown1966magnetoelastic}.

Recent works have considerably broadened the admissible deformation classes. In \cite{Bresciani21} and \cite{Barchiesi}, existence theories were established for subcritical Sobolev regular deformations using topological degree techniques. More recently, \cite{Bresciani22} extended these ideas to compressible magnetoelastic materials under the saturation constraint \eqref{eq:sat1}.
These approaches rely on suitable coercivity and polyconvexity assumptions and provide a powerful framework for treating magnetoelastic models beyond the classical Sobolev homeomorphism setting.

Mappings of finite distortion provide a natural framework for studying deformations with weak regularity while retaining quantitative control of local stretching and volume change. The theory has been extensively developed in geometric function theory and nonlinear elasticity; see, for example, \cite{Hencl-Koskela}. Within elasticity, distortion integrability is closely related to continuity, openness, invertibility and compactness properties of admissible deformations. Recent developments in this direction include \cite{dolevzalova2025global,dolevzalova2025inv}.

\subsection{Motivation from finite distortion theory} 

Mappings of finite distortion form a considerably larger deformation class than the Sobolev homeomorphism classes commonly used in nonlinear elasticity, while still retaining geometric information through the distortion coefficients. A central analytical ingredient of the present work is an open mapping theorem for mappings of finite distortion. The result is closely related to a well-known problem in geometric function theory, namely the Iwaniec-\v{S}ver\'ak conjecture \cite{Iwaniec_Sverak}. In its classical form, the conjecture asks whether every nonconstant mapping of finite distortion 
$$\bm f\in W^{1,p}\left(\Omega;\mathbb R^n\right), \qquad p\ge n, $$
with outer distortion coefficient 
$$K^O_{\bm f}\in L^{n-1}(\Omega) $$
is necessarily open. For $n=2$, the conjecture has been confirmed (see \cite{Hencl-Koskela}), while for dimensions \(n\ge3\) the problem remains largely open. Several partial results are available under additional assumptions on the distortion or the mapping itself or imposing some condition on the mapping close to the boundary of the domain \cite{Hencl-Koskela}.  In the present work, we contribute to this direction by proving openness under the additional assumption that the mapping satisfies the Ciarlet-Ne\v{c}as condition. The obtained result applies precisely in the borderline integrability regime \(K^O_{\bm f}\in L^{n-1}\). Beyond its independent interest in finite distortion theory, this openness result provides the key topological ingredient required for the magnetoelastic model prescribed in mixed Euler-Lagrange setting as considered here. It allows us to establish that admissible deformations are homeomorphisms and thereby obtain a natural admissible framework for compressible magnetoelastic solids.

While mappings of finite distortion provide a flexible framework for relatively weak regular deformations, they do not in general guarantee topological properties such as openness, injectivity, or global invertibility. Recovering these properties under sharp integrability assumptions on the distortion is therefore a central analytical challenge. The openness theorem established in this work addresses this issue and provides the topological foundation required for the subsequent existence theory.
 
\subsection{Main contributions}

The starting point of the present work is the observation that existence theories for magnetoelastic solids remain strongly influenced by the regularity assumptions imposed on admissible deformations. Our goal is to replace these higher-order regularity assumptions by geometric conditions expressed in terms of mappings of finite distortion.

The first contribution (Lemma \ref{lem2}) is a diameter estimate for inverse images of balls under finite distortion mappings satisfying the Ciarlet-Ne\v{c}as condition. Building on this estimate, we prove an open mapping theorem (\Cref{thm2}) under the critical integrability assumption
\begin{eqnarray*}
K^O_{\bm y}\in L^{n-1}.
\end{eqnarray*}
This result may be viewed as a partial positive result related to the Iwaniec-\v{S}ver\'ak conjecture on openness of finite distortion mappings \cite{Iwaniec_Sverak}. In particular, under the additional Ciarlet-Ne\v{c}as condition we establish openness in the borderline integrability regime.

These topological properties imply that all admissible deformations in our framework are homeomorphisms. This allows us to define a new admissible class for compressible magnetoelastic solids consisting of homeomorphic mappings of finite distortion and magnetizations satisfying the compressibility constraint.

Finally, we establish compactness of this admissible class and prove lower semicontinuity of the magnetoelastic energy. As a consequence, the direct method of the calculus of variations yields the existence of minimizers for compressible magnetoelastic solids in the proposed finite distortion setting (\Cref{thm1}).

\subsection{Organization of the paper} 

The remainder of the paper closely follows Chapter 3 of the first author's PhD thesis \cite{Dutta-thesis} and is organized as follows. 

In Section~\ref{preliminary notions}, we introduce the class of mappings of finite distortion and collect the analytical tools required in Section~\ref{Key lemmas and the main theorem}. We also formulate the assumptions imposed on the magnetostrictive energy density and discuss the coercivity framework underlying the variational problem. 

Section~\ref{Key lemmas and the main theorem} is devoted to the fundamental geometric and topological properties of admissible deformations. We first establish a diameter estimate for inverse images of balls under mappings of finite distortion satisfying the Ciarlet-Ne\v{c}as condition. This estimate is then used to prove an open mapping theorem under the critical integrability assumption on the outer distortion coefficient. As a consequence, the admissible deformations considered in this work possess the topological properties needed for the variational analysis. 

In Section~\ref{Existence results and proof}, we introduce the admissible class for compressible magnetoelastic solids and prove that every admissible deformation is a homeomorphism. We then establish a compactness theorem for energy-bounded sequences and prove strong convergence properties for the magnetization in composition for a homeomorphic deformation. 

The final part of Section~\ref{Existence results and proof} is devoted to the lower semicontinuity analysis of the magnetoelastic energy. Combining the compactness and lower semicontinuity results, we apply the direct method of the calculus of variations to obtain the existence of minimizers for the compressible magnetoelastic energy functional.

\section{Preliminary notions}
\label{preliminary notions}
In this section, we recall the basic notions and assumptions that will be used in the sequel. We first introduce mappings of finite distortion and the associated distortion coefficients, and then state the hypotheses imposed on the magnetostrictive energy density.
\subsection{Finite distortion}
\label{finite distortion}
\begin{definition}(Finite distortion \cite[page 14]{Hencl-Koskela}) Let $\Omega\subset\mathbb R^n$ be an open connected set for some $n\ge 2$. Then a mapping $\bm{f}:\Omega\rightarrow\mathbb R^n$ is called a mapping of finite distortion if $\bm{f}\in W^{1,1}_{loc}\left(\Omega; \mathbb R^n\right)$, the Jacobian $J_{\bm{f}}\in L^1_{loc}\left(\Omega\right)$ and there exists a function $K:\Omega\rightarrow [1, \infty]$ with $K\left(x\right)<\infty$ a.e.\ such that
\begin{eqnarray}
|\nabla\bm{f}\left(x\right)|^n\le K\left(x\right)J_{\bm{f}}\left(x\right) \mbox{ for almost all } x\in\Omega.\label{eq:finite_dis}
\end{eqnarray}
\end{definition}
For mappings of finite distortion $\bm{f}:\Omega\rightarrow\mathbb R^n$~$\left(n\ge 2\right)$, the outer distortion $K^O_{\bm{f}}:\Omega\rightarrow\mathbb R$ is defined as 
\begin{eqnarray}\label{eq:outer_distortion}
K^{O}_{\bm{f}}\left(x\right) =
\begin{cases}
\dfrac{|\nabla \bm{f}\left(x\right)|^n}{J_{\bm{f}}\left(x\right)} & \text{if $J_{\bm{f}}\left(x\right)> 0$},\\
1 & \text{if $J_{\bm{f}}\left(x\right)=0$}.
\end{cases}
\end{eqnarray}
In addition, we introduce the inner distortion coefficient $K^I_{\bm{f}}:\Omega\rightarrow\mathbb R$, which measures the distortion in terms of the adjugate matrix and is given by is defined by
\begin{eqnarray}\label{eq:inner_distortion}
K^{I}_{\bm{f}}\left(x\right) =
\begin{cases}
\dfrac{|\mbox{adj}\left(\nabla \bm{f}\left(x\right)\right)|^n}{J_{\bm{f}}\left(x\right)^{n-1}} & \text{if $J_{\bm{f}}\left(x\right)> 0$},\\
1 & \text{if $J_{\bm{f}}\left(x\right)=0$}.
\end{cases}
\end{eqnarray}
The outer and inner distortion coefficients $K^{O}$ and $K^{I}$ are connected through the following pointwise bounds (cf.\ \cite[Lemma 3.3]{yan2000weak}):
\begin{eqnarray}
\left(K^I_{\bm{f}}\right)^{\frac{1}{n-1}}\left(x\right)\le K^O_{\bm{f}}\left(x\right)\le \left(K^I_{\bm{f}}\right)^{{n-1}}.
\end{eqnarray}
\subsection{Coercivity condition on the magnetoelastic energy functional}
\label{sec:coercivity}
Throughout the paper, the reference configuration $\Omega\subset\mathbb R^3$ is assumed to be open and connected. The magnetostrictive energy density $\mathcal{W}:\mathbb R^{3\times 3}\times\mathbb R^3\rightarrow\mathbb R$ is required to satisfy the following assumptions.\newline
\\
\textbf{(i) Polyconvexity of $\mathcal{W}$:}  There exists a continuous function $\overline{\mathcal{W}}:\mathbb R^{3\times 3}\times\mathbb R^{3\times 3}\times \left(0, \infty\right)\times\mathbb R^3\rightarrow\mathbb R$ such that the function $\overline{\mathcal{W}}\left(\cdot,~\cdot,~\cdot, \bm{\lambda}\right)$ is convex for all $\bm{\lambda}\in\mathbb R^3$ and the following identity holds
\begin{eqnarray}
\mathcal{W}\left(\nabla\bm{y},\bm{\lambda}\right)=\overline{\mathcal{W}}\left(\nabla\bm{y}, \mathbf{Cof}\left(\nabla\bm{y}\right), \det(\nabla\bm{y}), \bm{\lambda}\right)\label{eq:poly}
\end{eqnarray}
for all $\nabla\bm{y}\in\mathbb{R}^{3\times 3}_+$, $\bm{\lambda}\in\mathbb R^3$.\newline
\\
\textbf{(ii) Coerciveness of $\mathcal{W}$:} There exist constants $\alpha_1, \alpha_2>0, p>3, q\ge 2, s>1$ such that
\begin{eqnarray}
\mathcal{W}\left(\nabla\bm{y}, \bm{\lambda}\right)\ge \alpha_1\left(|\nabla\bm{y}|^p + |K^{O}_{\bm{y}}|^q + \left(\det(\nabla\bm{y})\right)^{-s}\right) + \alpha_2\label{eq:coer}
\end{eqnarray}
for all $\nabla\bm{y}\in\mathbb{R}^{3\times 3}_+$, $\bm{\lambda}\in\mathbb R^3$.\newline\\
\textbf{(iii) Behavior of $\mathcal{W}$ under extreme compression, i.e., when $\det(\nabla\bm{y})\rightarrow 0^{+}$:} for all $\bm{\lambda}\in\mathbb R^3$,
\begin{eqnarray}
\lim_{\det(\nabla\bm{y})\rightarrow 0^{+}}\overline{\mathcal{W}}\left(\nabla\bm{y}, \bm{\lambda}\right)\rightarrow +\infty.\label{eq:beh}
\end{eqnarray}
We furthermore assume that the energy density is frame-indifferent, that is, 
\begin{eqnarray}
\mathcal{W}\left(R\nabla\bm{y}, R\bm{\lambda}\right)=\mathcal{W}\left(\nabla\bm{y},\bm{\lambda}\right)
\end{eqnarray}
for all $\nabla\bm{y}\in\mathbb{R}^{3\times 3}_+$, for all $\bm{\lambda}\in\mathbb R^3$, and $R\in SO(3)$.\newline
\section{Key lemmas and the main theorem}\label{Key lemmas and the main theorem}
In this section, we establish the main geometric and topological properties of admissible deformations. We begin by recalling several auxiliary results that will be used in the proof of the openness theorem. The central ingredient is a diameter estimate for inverse images of balls under mappings of finite distortion. The argument is inspired by the estimate in \cite[Lemma 2.1]{Onninen}, which extends a result from \cite[Lemma 1.2]{Hencl-Koskela} to three dimensions. In contrast to these earlier works, we do not assume that the deformation is a homeomorphism a priori. Instead, the estimate is derived from the finite distortion framework together with the integrability assumption on the outer distortion coefficient.
\begin{lemma}(\cite[page 143]{Evans-Gariepy})\label{lem1'}
Let $\Omega\subset\mathbb R^n$ be bounded, open and connected, and $\bm{y}\in W^{1,p}_{loc}\left(\Omega; \mathbb R^n\right)$, where $n<p<\infty$. Then there exists a constant $C_p\left(n\right)$ depending only on $n$ and $p$, such that
\begin{align}
|\bm{y}\left(x_1\right) - \bm{y}\left(x_2\right)|\le C_{p}\left(n\right)s^{1-\frac{n}{p}}\left(\int_{B_s}|\nabla \bm{y}|^pdx\right)^{\frac{1}{p}}\label{eq:lem1}
\end{align}
for almost every $x_1, x_2\in B_s = B\left(z, s\right)\subset \Omega$.
\end{lemma}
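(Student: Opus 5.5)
This is Morrey's inequality, and I would prove it in the standard way by comparing both values with the ball average. By density it suffices to prove the estimate for $\bm{y}\in C^1(\overline{B_s};\mathbb R^n)$, working componentwise. For general $\bm{y}\in W^{1,p}_{loc}$ I then take mollifications $\bm{y}_\varepsilon\to\bm{y}$ in $W^{1,p}(B_s)$ and a.e. The constant is uniform in $\varepsilon$, and $\int_{B_s}|\nabla\bm{y}_\varepsilon|^p\to\int_{B_s}|\nabla\bm{y}|^p$ (using a slightly larger ball inside $\Omega$, or approximating $B_s$ from inside by $B(z,s-\delta)$). So the inequality passes to the limit at every common Lebesgue point $x_1,x_2$, which gives the "almost every" statement.

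\textbf{Step 1: pointwise versus average.} For $x\in B_s$ and smooth $\bm{y}$, write $\bm{y}(w)-\bm{y}(x)=\int_0^1\nabla\bm{y}(x+t(w-x))(w-x)\,dt$. Average over $w\in B_s$ and use convexity of the ball, so the segment from $x$ to $w$ stays in $B_s$. Changing variables $v=x+t(w-x)$, which maps $B_s$ into a ball of radius $ts$ inside $B_s$, gives
\[
\Bigl|\,\bm{y}(x)-\frac{1}{|B_s|}\int_{B_s}\bm{y}(w)\,dw\Bigr|\le C(n)\int_{B_s}\frac{|\nabla\bm{y}(v)|}{|x-v|^{n-1}}\,dv ,
\]
where the factor $s^n/|B_s|$ cancels to a dimensional constant. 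This is the usual estimate as in Evans--Gariepy.

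\textbf{Step 2: Hölder.} Apply Hölder with exponents $p$ and $p'=p/(p-1)$:
\[
\int_{B_s}\frac{|\nabla\bm{y}|}{|x-v|^{n-1}}\,dv\le\Bigl(\int_{B_s}|\nabla\bm{y}|^p\Bigr)^{1/p}\Bigl(\int_{B(x,2s)}|x-v|^{-(n-1)p'}\,dv\Bigr)^{1/p'}.
\]
The last integral is finite exactly because $p>n$, i.e. $(n-1)p'<n$. Computing in polar coordinates, its $1/p'$ power equals $C(n,p)\,s^{1-n/p}$.

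\textbf{Step 3: combine.} Apply the bound of Steps 1--2 at $x_1$ and at $x_2$ and use the triangle inequality through the common average. This gives \eqref{eq:lem1} with $C_p(n)=2C(n)C(n,p)$.

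The only delicate points are the exponent bookkeeping in Step 2, where $p>n$ is essential, and the justification of the change of variables in Step 1. Neither is a real obstacle; the argument is textbook, and one could equally just cite \cite{Evans-Gariepy}.
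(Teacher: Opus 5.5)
Your proposal is correct and follows essentially the proof the paper relies on: the paper gives no argument of its own and cites Evans--Gariepy, whose proof is the same comparison with the ball average via the potential bound $\int_{B_s}|\nabla\bm{y}|\,|x-v|^{1-n}\,dv$, then H\"older (where $p>n$ is needed), the triangle inequality through the average, and approximation. Your inner approximation by $B(z,s-\delta)$, together with $(s-\delta)^{1-n/p}\le s^{1-n/p}$, correctly handles the fact that $\bm{y}$ is only in $W^{1,p}_{loc}$ and $\overline{B_s}$ need not lie in $\Omega$.
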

\begin{remark}\label{re-lem1}
The previous result can also be applied on $\left(n-1\right)$-dimensional slices. The reduction is obtained as follows.  
 Let $x\in\Omega\subset\mathbb R^n$. For each $\theta$, we define a set
$$\Omega_{\theta}=\{x'\in\mathbb R^{n-1}: \left(x', \theta\right)\in\Omega\}.$$
Since $\bm{y}\in W^{1,p}_{loc}\left(\Omega\right)$, due to Fubini-Tonelli's theorem (see e.g.\ \cite[Theorem 1.121, Theorem 1.124]{Fonseca-Leoni}), $\bm{y}\left(\cdot, \theta\right)\in W^{1,p}_{loc}\left(\Omega_{\theta}\right)$ for almost every $\theta\in\mathbb R$. Therefore, for an $\left(n-1\right)$-dimensional ball, Lemma \ref{lem1'} is understood as
\begin{align}
|\bm{y}\left(x', \theta\right)-\bm{y}\left(z', \theta\right)|\le C_p\left(n\right)s^{1 - \frac{n-1}{p}}\left(\int_{B^{n-1}_s}|\nabla_{x'} \bm{y}\left(\cdot, \theta\right)|^pdx'\right )^{\frac{1}{p}}
\end{align}
for almost every $\theta$.
\end{remark}
We next derive a diameter estimate that will play a crucial role in the proof of the openness theorem. The argument relies on Morrey's inequality together with the finite distortion structure of the deformation. As a crucial preliminary step, we show that sufficiently small balls around suitable image points are contained in the deformed configuration $\bm{y}\left(\Omega\right)$. 
\begin{lemma}\label{lem2}
Let $\Omega\subset\mathbb R^n$ be bounded, open, and connected. Suppose that $\bm{y}\in W^{1,p}\left(\Omega; \mathbb R^n\right)$, where $p\ge n$ for $n\ge 3$, is a continuous mapping of finite distortion that satisfies the following properties
\begin{enumerate}
    \item $K^O_{\bm{y}}\in L^{n-1}\left(\Omega\right)$,
    \item $\det(\nabla\bm{y})>0$ a.e.\ in $\Omega$,
    \item $\bm{y}$ satisfies the Ciarlet-Ne\v{c}as condition: $\int_{\Omega}\det(\nabla\bm{y}\left(X\right))dX\le\mu\left(\bm{y}\left(\Omega\right)\right)$.
    \end{enumerate} Then there is a constant $C_p\left(n\right)$ such that
\begin{eqnarray}
\operatorname{diam}\left(\bm{y}^{-1}\left(B_r\right)\right)\le C_p\left(n\right)r^{1-n}[\mu\left(\bm{y}^{-1}\left(B_{2r}\right)\right)]^{\frac{p-n+1}{p}}\left(\int_{\bm{y}^{-1}\left(B_{2r}\right)}|\nabla \bm{y}|^p\right)^{\frac{n-1}{p}}\label{eq:lem2}
\end{eqnarray}
for all balls $B_r = B\left(\bm{y}\left(X\right), r\right)$ such that $B_{3r} = B\left(\bm{y}\left(X\right), 3r\right)\subset \bm{y}\left(\Omega\right)$ and $X\in\Omega\setminus\left(\mathcal{K}\cup \mathcal{S}\right)$, where 
\begin{eqnarray*}
    \mathcal{K} &=& \{X\in\Omega: \bm{y} \mbox{ is not differentiable at } X\} \\ 
    \mathcal{S} &=& \{X\in\Omega: \bm{y} \mbox{ is differentiable at } X \mbox{ and } \det(\nabla \bm{y}\left(X\right)) = 0\}.
\end{eqnarray*}
\end{lemma}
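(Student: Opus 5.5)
We follow the scheme of \cite[Lemma 2.1]{Onninen} and \cite[Lemma 1.2]{Hencl-Koskela}: we estimate the oscillation of $\bm{y}^{-1}$ rather than of $\bm{y}$, working on the image side. The missing homeomorphism assumption is replaced by the Ciarlet-Ne\v{c}as condition together with $\det\nabla\bm y>0$ a.e.

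\textbf{Step 1: a local inverse.} Since $\bm y$ is a continuous $W^{1,p}$ map with $p\ge n$, it satisfies Lusin's condition (N) and the area formula holds. Combined with the Ciarlet-Ne\v{c}as condition and $\det\nabla\bm y>0$ a.e., this gives
\[\int_\Omega \det\nabla\bm y=\int_{\mathbb R^n}N(\bm y,\Omega,x)\,dx\ge\mu(\bm y(\Omega)).\]
Hence $N(\bm y,\Omega,x)=1$ for a.e.\ $x\in\bm y(\Omega)$, so $\bm y$ is injective off a null set of the image. Since $X\notin\mathcal K\cup\mathcal S$, $\bm y$ is differentiable at $X$ with invertible differential. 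The degree $\deg(\bm y,U,\cdot)$ on a small neighbourhood $U$ of $X$ is therefore $1$ near $\bm y(X)$. Using $B_{3r}\subset\bm y(\Omega)$ and a continuity argument in the radius, we get that $\bm y^{-1}(B_{2r})$ (the component containing $X$) is mapped onto $B_{2r}$ with multiplicity one a.e. This yields a generalized inverse $\bm h$ on $B_{2r}$ of class $W^{1,1}$, with $|\nabla\bm h(x)|=|\operatorname{adj}\nabla\bm y|/J_{\bm y}$ evaluated at $\bm h(x)$. This step uses the hypothesis $B_{3r}\subset\bm y(\Omega)$ and is where we expect the main obstacle: without a priori injectivity, the degree and multiplicity bookkeeping must be done carefully, likely via the Ciarlet-Ne\v{c}as condition applied to subdomains.

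\textbf{Step 2: slicing and Morrey.} For $t\in(r,2r)$ take spheres $S_t=\partial B(\bm y(X),t)$. Apply the $(n-1)$-dimensional Morrey estimate of Remark \ref{re-lem1} to $\bm h$ on $S_t$ with exponent $n-1<n-1+\varepsilon$. For a.e.\ $t$ this gives
\[\operatorname{diam}\bm h(S_t)\le C\,t^{1-\frac{n-1}{q}}\Big(\int_{S_t}|\nabla\bm h|^{q}\Big)^{1/q}.\]
Because $\bm h(B_t)$ is bounded by $\bm h(S_t)$ (degree one), we have $\operatorname{diam}\bm y^{-1}(B_r)\le\operatorname{diam}\bm h(S_t)$. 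We then average over $t\in(r,2r)$ using the coarea formula and obtain a bound involving $\int_{B_{2r}}|\nabla\bm h|^{q}$.

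\textbf{Step 3: change of variables.} We choose the exponent so that the right-hand side is expressed through $\bm y$. We use $|\operatorname{adj}A|\le|A|^{n-1}$, and the area formula (multiplicity one) gives
\[\int_{B_{2r}}|\nabla\bm h|^{n-1}\,dx\le\int_{\bm y^{-1}(B_{2r})}\frac{|\nabla\bm y|^{(n-1)^2}}{J_{\bm y}^{\,n-2}}\,dX.\]
With $K^O=|\nabla\bm y|^n/J$, this equals $\int (K^O)^{n-2}|\nabla\bm y|^{n-1}$, or alternatively it can be bounded using $|\operatorname{adj}\nabla\bm y|\le|\nabla\bm y|^{n-1}$ directly. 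We then apply H\"older with exponents $p/(n-1)$ and $p/(p-n+1)$ on $\bm y^{-1}(B_{2r})$. This produces the factor $[\mu(\bm y^{-1}(B_{2r}))]^{(p-n+1)/p}\big(\int|\nabla\bm y|^p\big)^{(n-1)/p}$. The remaining $r$-power comes from the averaging $\frac1r\int_r^{2r}dt$ together with the sphere Morrey constant, giving $r^{1-n}$; the integrability $K^O\in L^{n-1}$ ensures that all quantities are finite and that $\bm h\in W^{1,n-1}$. Matching the powers exactly, especially absorbing the distortion factor, is a routine calculation but must be checked against the stated exponents. Should the direct estimate leave a residual $K^O$ factor, we would instead bound $|\nabla\bm h|\le|\nabla\bm y|^{n-1}/J$ and control $J^{-1}$ using $J\ge|\nabla\bm y|^n/K^O$.
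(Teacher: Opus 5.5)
Your Step~1 is not a bookkeeping issue; it is the whole difficulty, and in this paper it is circular. The facts you invoke give only $N(\bm{y},\Omega,x)=1$ for a.e.\ $x$ and local degree one: Ciarlet--Ne\v{c}as, $\det\nabla\bm{y}>0$ a.e., (N) and the area formula. (For $p=n$, (N) comes from the finite-distortion structure, not from continuity alone.) These facts do not stop $\bm{y}$ from collapsing a continuum to a point. For example, a Lipschitz map with $J_{\bm{y}}>0$ a.e.\ that squeezes a segment to a point satisfies all of them. It fails only $K^O_{\bm{y}}\in L^{n-1}$, and only barely, and Step~1 never uses that hypothesis.

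Step~2, however, needs a genuine continuous inverse. The inequality $\operatorname{diam}\bm{y}^{-1}(B_r)\le\operatorname{diam}\bm{h}(S_t)$ requires the boundary of the relevant component of $\bm{y}^{-1}(B_t)$ to lie in $\bm{h}(S_t)$. Morrey's inequality on $S_t$ only controls the continuous representative of the trace of an a.e.-defined $W^{1,1}$ map, not the whole set $\bm{y}^{-1}(S_t)$. That $\bm{y}$ is a homeomorphism from a component of $\bm{y}^{-1}(B_{2r})$ onto $B_{2r}$ is essentially the openness and injectivity that Theorem \ref{thm2} and Lemma \ref{lem3} are meant to deduce \emph{from} this lemma.

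Even granting Steps~1--2, Step~3 cannot yield \eqref{eq:lem2}. Morrey on $(n-1)$-spheres forces $q>n-1$, and the change of variables gives $\int_{B_{2r}}|\nabla\bm{h}|^q\,dx=\int_{\bm{y}^{-1}(B_{2r})}|\operatorname{adj}\nabla\bm{y}|^q J_{\bm{y}}^{1-q}\,dX\le\int_{\bm{y}^{-1}(B_{2r})}|\nabla\bm{y}|^{n-q}(K^O_{\bm{y}})^{q-1}\,dX$. This carries a negative power of the Jacobian. The right-hand side of \eqref{eq:lem2} contains only $\int|\nabla\bm{y}|^p$ and $\mu(\bm{y}^{-1}(B_{2r}))$, so no H\"older step can remove that distortion weight. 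Your identity is also off: at $q=n-1$ the integrand is $(K^O_{\bm{y}})^{n-2}|\nabla\bm{y}|$, not $(K^O_{\bm{y}})^{n-2}|\nabla\bm{y}|^{n-1}$.

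The missing idea is to slice in the reference configuration and apply the $(n-1)$-dimensional Morrey estimate of Remark \ref{re-lem1} to $\bm{y}$ itself, which needs no inverse at all. The paper's argument runs as follows:
\begin{itemize}
\item Place $d=\operatorname{diam}\bm{y}^{-1}(B_r)$ along the $x_n$-axis.
\item For $\theta\in(0,d)$, pick $z(\theta)\in\bm{y}^{-1}(B_r)$ at height $\theta$, and take the largest slice ball $B^{n-1}(z(\theta),s(\theta))\times\{\theta\}$ contained in $\bm{y}^{-1}(B_{2r})$.
\item Its boundary meets $\partial\bm{y}^{-1}(B_{2r})$, so $\bm{y}$ oscillates by at least $r$ on it. This gives $s(\theta)^{n-1-p}\le C r^{-p}\int|\nabla_{x'}\bm{y}(\cdot,\theta)|^p\,dx'$.
\item Integrate in $\theta$. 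Then apply H\"older with $\beta=(n-1)(p-n+1)/p$, together with $\omega_{n-1}\int_0^d s(\theta)^{n-1}\,d\theta\le\mu(\bm{y}^{-1}(B_{2r}))$.
\end{itemize}
This gives exactly \eqref{eq:lem2}, with $r^{1-n}$ and no distortion term.
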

\begin{proof}
The proof is devided into two steps.\newline
\textbf{Step I.}\newline
Since $\bm{y}$ is continuous and satisfies $\det(\nabla \bm{y}\left(X\right))>0$ almost everywhere, we first verify that image points corresponding to \(X\in \Omega\setminus(K\cup S)\) possess neighborhoods contained in \(y(\Omega)\). More precisely, we assert that for a continuous function $\bm{y}: \Omega\rightarrow\mathbb R^n$, which satisfies the Lusin ($\it{N}$) condition and $\det(\nabla \bm{y}\left(X\right))>0$ a.e. in $\Omega$, there exists $r>0$ such that $B\left(\bm{y}\left(X\right), r\right)\subset \bm{y}\left(\Omega\right)$ for any $X\in \Omega\setminus\left(\mathcal{K}\cup \mathcal{S}\right)$. Now let $X\in\Omega\setminus\left(\mathcal{K}\cup\mathcal{S}\right)$. Due to  \cite[Lemma 3.5.]{fonseca1995local}, there exists $r_X>0$ such that $B\left(X, r_X\right)\subset\subset\Omega$ and $\mbox{deg}\left(\bm{y}\left(X\right), \bm{y}, B\left(X, r_X\right)\right)>0$. We take the connected component of $\mathbb R^n\setminus \bm{y}\left(\partial B\left(X, r_X\right)\right)$, which contains $\bm{y}\left(X\right)$ and denote it by $C$. Using the definition of outer distortion $K^O_{\bm{y}}$ cf.\ \eqref{eq:outer_distortion}, the assumption $K^O_{\bm{y}}\ge 1$, and $\det(\nabla\bm{y})>0$ almost everywhere, we obtain
\begin{eqnarray}
\frac{|\nabla \bm{y}|^n}{\log(e+|\nabla \bm{y}|)}&\le& \frac{K^O_{\bm{y}}\det(\nabla \bm{y})}{\log(e+\left(K^O_{\bm{y}}\right)^{\frac{1}{n}}\det(\nabla \bm{y})^{\frac{1}{n}})}\nonumber\\
&\le& \frac{K^O_{\bm{y}}\det(\nabla \bm{y})}{\log(e+\det(\nabla \bm{y})^{\frac{1}{n}})}\nonumber\\
&\le& n\frac{K^O_{\bm{y}}\det(\nabla \bm{y})}{\log(e+\det(\nabla \bm{y}))}\nonumber\\
&\le& n\frac{K^O_{\bm{y}}\left(e-1+\det(\nabla \bm{y})\right)}{\log\left[1+\left(e-1+\det(\nabla \bm{y})\right)\right]}.\label{eq:log1}
\end{eqnarray}
To control the logarithmic term appearing above, we take the term $\frac{e-1+\det(\nabla \bm{y})}{\log\left[1+\left(e-1+\det(\nabla \bm{y})\right)\right]}$ and let $\zeta=e-1+\det(\nabla \bm{y})$. Further we consider that
\begin{eqnarray}
f\left(\zeta\right)= \log\left(1+\zeta\right)-\frac{\zeta}{1+\zeta}.
\end{eqnarray}
A direct computation shows that $f'\left(\zeta\right)=\frac{\zeta}{\left(1+\zeta\right)^2}>0$, when $\zeta>0$ and $f'\left(\zeta\right)=0$, when $\zeta=0$, $f\left(\zeta\right)$ is strictly increasing for $\zeta\in\left[0, \infty\right)$. Thus we have
\begin{eqnarray}
f\left(\zeta\right)>f\left(0\right) \mbox{ for } \zeta>0,
\end{eqnarray}
which yields
\begin{eqnarray}
\log\left[1+\left(e-1+\det(\nabla \bm{y})\right)\right]>\frac{e-1+\det(\nabla \bm{y})}{1+\left(e-1+\det(\nabla \bm{y})\right)}.\label{eq:log2''}
\end{eqnarray}
Next we insert \eqref{eq:log2''} into \eqref{eq:log1} and obtain
\begin{eqnarray}
\int_{\Omega}\frac{|\nabla \bm{y}|^n}{\log(e+|\nabla \bm{y}|)}< n\int_{\Omega}K^O_{\bm{y}}[1+\left(e-1+\det(\nabla \bm{y})\right)]<\infty,\label{eq:log3}
\end{eqnarray}
where the last inequality follows due to the fact that $K^O_{\bm{y}}\in L^{n-1}\left(\Omega\right)$ and by using the definition of $K^O_{\bm{y}}$. Since $|\nabla \bm{y}|\in L^n\log^{-1}L\left(\Omega\right)$, we obtain $\mbox{deg}\left(C, \bm{y}, B\left(X, r_X\right)\right)>0$ by \cite[Theorem 3.16]{Hencl-Koskela}. Hence, by the property of topological degree (e.g.\ \cite[page 48]{Hencl-Koskela}), $C\subset \bm{y}\left(B\left(X, r_X\right)\right)$. Since $\bm{y}\left(\partial B\left(X, r_X\right)\right)$ is a compact subset in $\mathbb R^n$, its complement is open. Consequently, $\mathbb R^n\setminus \bm{y}\left(\partial B\left(X, r_X\right)\right)$ is open, and the assertion follows.
\medskip  
\\
\noindent Having established the geometric inclusion above, we now turn to the proof of inequality \eqref{eq:lem2}.\newline\\
\textbf{Step II.}\newline
Fix $X\in\Omega\setminus\left(\mathcal{K}\cup \mathcal{S}\right)$ and consider $$d=\operatorname{diam}\left(\bm{y}^{-1}\left(B_r\right)\right).$$ Without loss of generality, we assume that $\overline{\bm{y}^{-1}\left(B_r\right)}$ contains the origin and the point $\left(0, 0, \ldots, 0, d\right)$. For any $\theta\in \left(0, d\right)$, we choose a point $z(\theta)\in \bm{y}^{-1}\left(B_r\right)$ such that the $n$-th coordinate of $z(\theta)$ lies between $0$ to $d$, cf. \Cref{Fig:Ball}, and we denote
\begin{align*}
    \mathcal{Q}_{\theta} = \{s\left(\theta\right)\in\mathbb R: \;& B^{n-1}\left(z\left(\theta\right), s\left(\theta\right)\right)\times\{\theta\}\subset \bm{y}^{-1}\left(B_{2r}\right) \mbox{ and }\\
& \partial B^{n-1}\left(z\left(\theta\right), s\left(\theta\right)\right)\cap\partial\bm{y}^{-1}\left(B_{2r}\right)\ne\phi\},
\end{align*}
where $B^{n-1}\left(z\left(\theta\right), s\left(\theta\right)\right)$ stands for the $\left(n-1\right)$-dimensional ball of radius $s\left(\theta\right)>0$ with center at $z\left(\theta\right)$. Such a radius $s\left(\theta\right)$ can be chosen as the smallest distance from $z\left(\theta\right)$ to the boundary of $\bm{y}^{-1}\left(B_{2r}\right)$ for each $\theta$, see \Cref{Fig:Ball}.\newline
\begin{figure}
    \centering
    \includegraphics[width=0.55\linewidth]{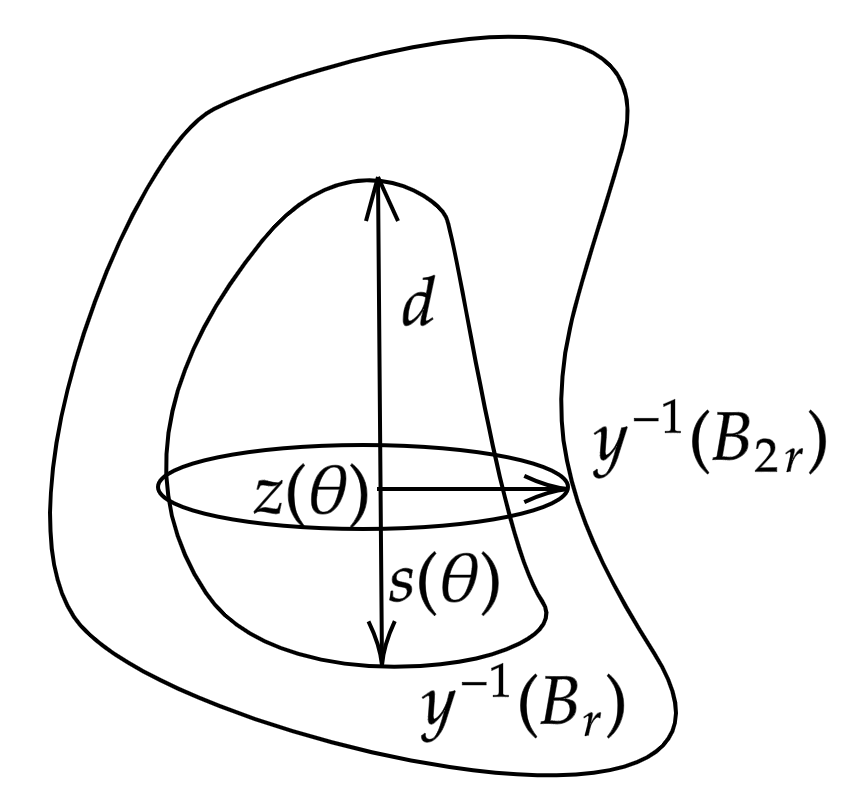}
    \caption{An example of the geometric setting for a $\theta \in (0,d)$  showing an $\left(n-1\right)$-dimensional ball $B^{n-1}\left(z\left(\theta\right),s\left(\theta\right)\right)$ such that the boundary of the ball $B^{n-1}\left(z\left(\theta\right),s\left(\theta\right)\right)$ touches the boundary of the outer ball $\bm{y}^{-1}\left(B_{2r}\right)$.}
    \label{Fig:Ball}
\end{figure}
\newline
As $p> n-1$, in view of Remark \ref{re-lem1}, for almost every $\theta$, we have
\begin{align}
|\bm{y}\left(x', \theta\right)-\bm{y}\left(y', \theta\right)|\le C_p\left(n\right)s\left(\theta\right)^{1 - \frac{n-1}{p}}\left(\int_{B^{n-1}\left(z\left(\theta\right),\ s\left(\theta\right)\right)}|\nabla_{x'} \bm{y}\left(\cdot, \theta\right)|^pdx'\right )^{\frac{1}{p}}\label{eq:le6}
\end{align}
for any $\left(x', \theta\right), \left(y', \theta\right)\in B^{n-1}\left(z\left(\theta\right),\ s\left(\theta\right)\right)$.\newline\\
Applying the same reasoning as in \cite[Lemma 2.19]{Hencl-Koskela}, we obtain
\begin{eqnarray}
\mbox{osc}_{B^{n-1}\left(z\left(\theta\right),~s\left(\theta\right)\right)}\left(\bm{y}\right)\le C_p\left(n\right)s\left(\theta\right)^{1 - \frac{n-1}{p}}\left(\int_{B^{n-1}\left(z\left(\theta\right),~s\left(\theta\right)\right)}|\nabla_{x'} \bm{y}\left(\cdot, \theta\right)|^pdx'\right )^{\frac{1}{p}}.
\end{eqnarray}
The definition of $\mathcal{Q}_{\theta}$ yields
%Since $\nabla y\in L^{n-1}$ and $\mbox{adj}\left(\nabla \bm{y}\right)\in L^{\frac{n}{n-1}}$, the mapping of finite distortion $\bm{y}$ is weakly monotone \cite{iwaniec2001mappings}. Because of continuity, $\bm{y}$ is monotone (see e.g.\ \cite[Theorem 2.24]{Hencl-Koskela}). Therefore
\begin{eqnarray}
r\le C_p\left(n\right)s\left(\theta\right)^{1 - \frac{n-1}{p}}\left(\int_{B^{n-1}\left(z\left(\theta\right),~s\left(\theta\right)\right)}|\nabla_{x'} \bm{y}\left(\cdot, \theta\right)|^pdx'\right )^{\frac{1}{p}}\label{eq:le6'}
\end{eqnarray}
which in turn can be written as
\begin{eqnarray}
s\left(\theta\right)^{n-p-1}\le [C_p\left(n\right)]^p r^{-p}\int_{B^{n-1}\left(z\left(\theta\right),~s\left(\theta\right)\right)}|\nabla_{x'} \bm{y}\left(\cdot, \theta\right)|^pdx'.\label{eq:le6.1}
\end{eqnarray}
Integrating inequality \eqref{eq:le6.1} with respect to $\theta \in (0,d)$ and applying Fubini's theorem (see e.g.\ \cite[Theorem 1.124]{Fonseca-Leoni}), we derive
\begin{eqnarray}
\int_{0}^{d}s\left(\theta\right)^{n-p-1}d\theta &\le& [C_p\left(n\right)]^p r^{-p}\int_{0}^{d}\int_{B^{n-1}\left(z\left(\theta\right),~s\left(\theta\right)\right)}|\nabla_{x'} \bm{y}\left(x', \theta\right)|^pdx'd\theta\nonumber\\
&\le& [C_p\left(n\right)]^p r^{-p}\left(\int_{0}^{d}\int_{B^{n-1}\left(z\left(\theta\right),~s\left(\theta\right)\right)}|\nabla_{x'} \bm{y}\left(x', \theta\right)|^p+|\nabla_{\theta} \bm{y}\left(x', \theta\right)|^pdx'd\theta\right)\nonumber\\
&\le& [C_p\left(n\right)]^p r^{-p}\int_{\bm{y}^{-1}\left(B_{2r}\right)}|\nabla \bm{y}|^pdX.\label{eq:le6.2}
\end{eqnarray}
Next, we use Hölder's inequality with a parameter $\beta>0$.
\begin{eqnarray}
\begin{aligned}
d^{\frac{p}{n-1}} &= \left(\int_{0}^{d}s\left(\theta\right)^{\beta}s\left(\theta\right)^{-\beta}d\theta\right)^{\frac{p}{n-1}}\nonumber\\
&\underset{\text{H\"older}}{\leq}\left(\int_{0}^{d}s\left(\theta\right)^{\frac{\beta p}{p-n+1}}d\theta\right)^{\frac{p-n+1}{n-1}}\left(\int_{0}^{d}s\left(\theta\right)^{\frac{-\beta p}{n-1}}d\theta\right).\label{eq:le6.3}
\end{aligned}
\end{eqnarray}
\mbox{Choosing in particular } $\beta = \frac{\left(n-1\right)\left(p-n+1\right)}{p}$,\mbox{ we obtain }
\begin{eqnarray}
d^{\frac{p}{n-1}}&\le& \left(\int_{0}^{d}s\left(\theta\right)^{n-1}d\theta\right)^{\frac{p-n+1}{n-1}}\left(\int_{0}^{d}s\left(\theta\right)^{n-p-1}d\theta\right)\nonumber\\
&\le& \left(\int_{0}^{d}s\left(t\right)^{n-p-1}d\theta\right)\left[\mu\left(\bm{y}^{-1}\left(B_{2r}\right)\right)\right]^{\frac{p-n+1}{n-1}}.\label{eq:le10}
\end{eqnarray}
Using \eqref{eq:le6.2} in \eqref{eq:le10}, we get the estimate \eqref{eq:lem2}.
\end{proof}
\begin{remark}
The preceding estimate remains valid for $p\ge n$ under the continuity assumption on the mapping. Owing to the additional integrability condition $K^O_{\bm{y}}\in L^{n-1}$, the result strengthens the corresponding estimate established in \cite{Onninen}.
\end{remark}
\begin{remark}
The validity of the Lusin condition follows from \cite[Theorem 4.8]{Hencl-Koskela} together with estimate \eqref{eq:log3}.
\end{remark}
\begin{remark}
Almost everywhere differentiability of $\bm{y}$ is a consequence of \cite[Corollary 2.25]{Hencl-Koskela}.
\end{remark}
We are now in a position to prove the main topological result of this section. \Cref{thm2} provides an openness result in the critical distortion regime $K^{O}_{\bm{y}}\in L^{n-1}$, under the additional assumption of the Ciarlet-Nečas condition. The proof is motivated by the ideas surrounding the conjecture of
\cite{Iwaniec_Sverak}. A crucial step is to obtain sufficient regularity of the inverse mapping from the available distortion bounds.
\newline
\begin{theorem}\label{thm2}
Let $\Omega\subset\mathbb R^n$ be bounded, open and connected, and $\bm{y}\in W^{1,p}\left(\Omega; \mathbb R^n\right)$ for $p>n, n\ge 3$. In addition, assume that $\det(\nabla \bm{y}\left(X\right))> 0$ a.e., $K^{O}_{\bm{y}}\in L^{n-1}\left(\Omega\right)$, and $\int_{\Omega}\det(\nabla \bm{y}\left(X\right))dX\le \mu\left(\bm{y}\left(\Omega\right)\right)$. Then $\bm{y}$ is an open mapping.
\end{theorem}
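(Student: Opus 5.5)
The plan is to show that $\bm{y}$ is continuous, discrete and sense-preserving, and then to invoke the classical Titus--Young/Chernavskii-type result that such a map is open (cf.\ \cite{Hencl-Koskela}). Since $p>n$, Morrey's embedding (Lemma \ref{lem1'}) gives a continuous representative. As in Step~I of the proof of Lemma \ref{lem2}, the assumption $K^O_{\bm{y}}\in L^{n-1}$ together with estimate \eqref{eq:log3} places $|\nabla\bm{y}|$ in $L^n\log^{-1}L$. With $\det(\nabla\bm{y})>0$ a.e.\ this yields the Lusin $(N)$ condition and $\deg(\bm{y}(X),\bm{y},B)>0$ for small balls $B$ around any $X\in\Omega\setminus(\mathcal{K}\cup\mathcal{S})$; the latter follows from \cite[Theorem 3.16]{Hencl-Koskela} and \cite[Lemma 3.5]{fonseca1995local}. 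Hence $\bm{y}$ is sense-preserving, and in particular every such image point is interior to $\bm{y}(\Omega)$.

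Next I would use the Ciarlet--Ne\v{c}as condition to show that $\bm{y}$ is injective almost everywhere. By the area formula, which is available thanks to the Lusin $(N)$ condition,
\[
\int_\Omega \det(\nabla\bm{y})\,dX=\int_{\mathbb R^n} N(\bm{y},\Omega,x)\,dx\ \ge\ \mu(\bm{y}(\Omega)),
\]
so equality holds, and therefore $N(\bm{y},\Omega,x)=1$ for a.e.\ $x\in\bm{y}(\Omega)$. Combined with the positivity of the degree, this gives $\deg(x,\bm{y},D)=1$ for a.e.\ $x$ in $\bm{y}(D)\setminus\bm{y}(\partial D)$, for every subdomain $D\subset\subset\Omega$. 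It also shows that $\bm{y}$ is monotone in the sense that preimages of points are connected; I would get this from weak monotonicity, which follows from $\operatorname{adj}\nabla\bm{y}\in L^{n/(n-1)}$, and \cite[Theorem 2.24]{Hencl-Koskela}.

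The key step, and the main obstacle, is \emph{discreteness}: I need $\bm{y}^{-1}(x_0)$ to be a single point for every $x_0\in\bm{y}(\Omega)$, and not merely for a.e.\ $x_0$. This is where Lemma \ref{lem2} enters. Consider a point $x_0=\bm{y}(X)$ with $X$ a good point, and a ball with $B_{3r}\subset\bm{y}(\Omega)$. Since $p>n$, the exponent $(p-n+1)/p$ in \eqref{eq:lem2} is positive. Also $\mu(\bm{y}^{-1}(B_{2r}))\to\mu(\bm{y}^{-1}(\{x_0\}))$ as $r\to0$, and this limit is $0$: indeed $\det\nabla\bm{y}>0$ a.e.\ gives $\mu(\bm{y}^{-1}(\{x_0\}))=0$, because on a set of positive measure where $\bm{y}$ is constant we would have $\nabla\bm{y}=0$. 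The remaining factor is $r^{1-n}\bigl(\int_{\bm{y}^{-1}(B_{2r})}|\nabla\bm{y}|^p\bigr)^{(n-1)/p}$. The natural way to bound it would be to compare $\int_{\bm{y}^{-1}(B_{2r})}|\nabla\bm{y}|^n\le\int K^O_{\bm{y}}\det\nabla\bm{y}$ with $|B_{2r}|\sim r^n$ using the a.e.\ injectivity, but that argument uses the $L^n$ norm, not the $L^p$ norm appearing in \eqref{eq:lem2}. I would first try to balance the powers, using H\"older together with $K^O_{\bm{y}}\in L^{n-1}$, to obtain $\operatorname{diam}(\bm{y}^{-1}(B_r))\to0$ as $r\to0$. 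If that fails, I would fall back on a weaker statement: since $\bm{y}^{-1}(x_0)$ is connected and of measure zero, I only need its diameter to vanish, and the estimate just has to show that $\operatorname{diam}(\bm{y}^{-1}(B_r))$ is finite and decreasing with limit $0$.

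Once $\bm{y}^{-1}(x_0)$ is known to be a singleton for good image points, I would extend this to all of $\bm{y}(\Omega)$. Take an arbitrary point and approximate it by good points. Points where $\bm{y}$ is constant on a continuum would produce a degenerate component of the preimage, which the degree-one property and monotonicity exclude. So $\bm{y}$ is injective, continuous and sense-preserving on $\Omega$, and invariance of domain then gives openness directly. Even if injectivity cannot be fully obtained, discreteness together with sense-preservation still yields openness via the Titus--Young theorem.
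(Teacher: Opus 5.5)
Your proposal has a genuine gap at the step you yourself flag as the main obstacle. Discreteness at \emph{every} point of $\bm{y}(\Omega)$ is never established, and the tools you name cannot establish it.

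\textbf{Lemma \ref{lem2} only gives a.e.\ information.} Put $a(r)=\mu(\bm{y}^{-1}(B_{2r}))/\mu(B_{2r})$ and $b(r)=\mu(B_{2r})^{-1}\int_{\bm{y}^{-1}(B_{2r})}|\nabla\bm{y}|^p\,dX$. Then the right-hand side of \eqref{eq:lem2} equals a constant times $r\,a(r)^{\frac{p-n+1}{p}}b(r)^{\frac{n-1}{p}}$. So the lemma forces $\operatorname{diam}(\bm{y}^{-1}(B_r))\to0$ only at centres where $a(r)^{\frac{p-n+1}{p}}b(r)^{\frac{n-1}{p}}=o(r^{-1})$.

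Nothing in your proposal controls these ratios at a prescribed centre. Differentiation of measures controls them only at almost every image point. That is exactly how the paper uses the lemma, to get $\limsup_{r\to0}\operatorname{diam}(\bm{y}^{-1}(B_r))/r<\infty$ a.e. A H\"older argument with $K^O_{\bm{y}}\in L^{n-1}$ gives global integral bounds, not decay of averages at a given point. And at a.e.\ image point you already have a singleton fibre from $N(\bm{y},\Omega,x)=1$ a.e., so the lemma adds nothing at the exceptional points. Your fallback is circular: the sets $\bm{y}^{-1}(B_r)$ shrink to $\bm{y}^{-1}(x_0)$, so asking their diameters to vanish is asking the fibre to be a point.

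\textbf{The extension to all points is false as argued.} Fibres are only upper semicontinuous, so approximating $x_0$ by good points only shows that limits of their preimages lie in $\bm{y}^{-1}(x_0)$. Degree one plus connected fibres do not exclude a collapsed continuum. Here is a counterexample to that step:
\begin{itemize}
\item Take a segment $I$, let $\Omega=\{X:\operatorname{dist}(X,I)<1\}$, and take an orientation-preserving Lipschitz map sending each capsule $\{\operatorname{dist}(X,I)=\rho\}$ homeomorphically onto the sphere $\{|x|=\rho\}$, compressing the axial direction by a factor $\approx\rho$.
\item It is a homeomorphism of $\Omega\setminus I$. Hence it has Lusin (N), satisfies the Ciarlet--Ne\v{c}as condition with $N(\bm{y},\Omega,x)=1$ a.e., and has degree one and connected fibres.
\item Yet it maps $I$ to a point and is neither discrete nor open.
\item Its singular values are $\approx 1,\dots,1,\rho$, so $K^O_{\bm{y}}\approx\rho^{-1}\in L^q$ for every $q<n-1$, but $K^O_{\bm{y}}\notin L^{n-1}$.
\item At the collapsed point $a(r),b(r)\approx r^{-1}$, so \eqref{eq:lem2} gives only $\operatorname{diam}=O(1)$.
\end{itemize}
So every property you derive is compatible with failure of the theorem. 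The sharp hypothesis must act at each point, but you use it only for Lusin (N) and positivity of the local degree, which hold far below that threshold. Separately, weak monotonicity is a maximum principle for the coordinates and does not give connected fibres: the winding map $(r,\varphi,x_3)\mapsto(r,2\varphi,x_3)$ is weakly monotone.

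\textbf{How the paper proceeds instead.} It never proves pointwise discreteness. It uses the a.e.\ bound from Lemma \ref{lem2} only to make $\bm{y}^{-1}$ differentiable a.e.\ (Rademacher--Stepanov) and to obtain \eqref{eq:17}. The exponent $n-1$ then enters through $\int|\nabla\bm{y}^{-1}|^n\,dx=\int_\Omega K^I_{\bm{y}}\,dX\le\int_\Omega (K^O_{\bm{y}})^{n-1}\,dX<\infty$. This makes $\bm{y}^{-1}$ a $W^{1,n}_{loc}$ mapping of finite distortion and hence continuous, which is what rules out collapsed continua. In the example, $K^I_{\bm{y}}\approx\rho^{1-n}$ is not integrable, consistent with the inverse being discontinuous at the collapsed point.
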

\begin{proof}
The argument is divided into three parts.\newline
\textbf{Step I:} We begin by establishing almost everywhere differentiability of the inverse mapping. First, we show that $\bm{y}^{-1}$ is differentiable a.e. in $\bm{y}\left(\Omega\right)$. We proceed to show the Rademacher-Stepanov criteria of differentiability. Due to Rademacher-Stepanov Theorem \cite{Vaisala}, $\bm{y}^{-1}$ is differentiable a.e. in $\bm{y}\left(\Omega\right)$, provided that
\begin{align}
\lim\sup_{r\rightarrow 0}\frac{\operatorname{diam}\left[\bm{y}^{-1}\left(B\left(x, r\right)\right)\right]}{r}<\infty \mbox{ for a.e. } x\in \bm{y}\left(\Omega\right).\label{eq:ra1}
\end{align}
Since $\bm{y}$ is continuous, the preimage of an open set under $\bm{y}$ is open. Thus, the volumetric derivative $\mu'_{\bm{y}^{-1}}$ of the set function $\mu_{\bm{y}^{-1}}$ at a point $x\in\bm{y}\left(\Omega\right)$ is given by
\begin{eqnarray}
\mu'_{\bm{y}^{-1}}\left(x\right) = \lim_{r\rightarrow 0}\frac{\mu\left(\bm{y}^{-1}\left(B\left(x, r\right)\right)\right)}{\mu\left(B\left(x, r\right)\right)}.\label{eq:thm1}
\end{eqnarray}
It therefore suffices to verify condition \eqref{eq:ra1}.\newline\\
Define a set
\begin{eqnarray}
\mathbb M = \{X\in\Omega: \bm{y} \mbox{ is differentiable at } X \mbox{ and } J_{\bm{y}}\left(X\right)>0\}.
\end{eqnarray}
Furthermore, define
\begin{eqnarray}
\Psi\left(x\right) = \frac{|\left(\nabla \bm{y}\right)\left(\bm{y}^{-1}\left(x\right)\right)|^p}{J_{\bm{y}}\left(\bm{y}^{-1}\left(x\right)\right)}\chi_{\bm{y}\left(\mathbb M\right)}\left(x\right).\label{eq:ra2}
\end{eqnarray}
By Lemma \ref{lem2}, we have
\begin{eqnarray}
\operatorname{diam}\left(\bm{y}^{-1}\left(B_r\right)\right)&\le& C_p\left(n\right)r^{1-n}\left[\mu\left(\bm{y}^{-1}\left(B_{2r}\right)\right)\right]^{\frac{p-n+1}{p}}\left(\int_{\bm{y}^{-1}\left(B_{2r}\right)}|\nabla \bm{y}\left(X\right)|^p\right)^{\frac{n-1}{p}}.\nonumber\\
\label{eq:ra1'}
\end{eqnarray}
Using the general area formula (e.g.\ \cite{Hencl-Koskela}) for the function $\bm{y}$ in \eqref{eq:ra1'}, we obtain
\begin{align}
\operatorname{diam}\left[\left(\bm{y}^{-1}\left(B_r\right)\right)\right] &\le C_p\left(n\right)r^{1-n}\left[\mu\left(\bm{y}^{-1}\left(B_{2r}\right)\right)\right]^{\frac{p-n+1}{p}}\left(\int_{B_{2r}}\Psi\left(x\right)N\left(\bm{y}, \Omega, x\right)dx\right)^{\frac{n-1}{p}}\nonumber\\
&\le C_p\left(n\right)r^{1-n}\left[\mu\left(\bm{y}^{-1}\left(B_{2r}\right)\right)\right]^{\frac{p-n+1}{p}}\left(\int_{B_{2r}}\Psi\left(x\right)dx\right)^{\frac{n-1}{p}}.\label{eq:le14}
\end{align}
The final estimate uses the inequality $N\left(\bm{y}, \Omega, x\right)\le 1$ a.e. in $\Omega$, which follows from the Ciarlet-Ne\v{c}as condition.\newline
\\
Similarly, using the area formula (e.g.\ \cite{Hencl-Koskela}) for the function $\bm{y}^{-1}$, we obtain
\begin{align}
\int_{V}\Psi\left(x\right)dx &= \int_{V}|\left(\nabla \bm{y}\right)\left(\bm{y}^{-1}\left(x\right)\right)|^pJ_{\bm{y}^{-1}}\left(x\right)\chi_{\bm{y}\left(\mathbb M\right)}\left(x\right)dx\nonumber\\
&\le \int_{\bm{y}^{-1}\left(V\right)}|\nabla \bm{y}|^pdX<\infty
\end{align}
for any open subset $V$ of $\bm{y}\left(\Omega\right)$. The Lebesgue-Besicovitch differentiation theorem (cf.\ \cite{Evans-Gariepy}) for the function $\Psi$ yields
\begin{eqnarray}
\lim\sup_{r\rightarrow 0}\left(\fint_{B_{2r}}\Psi\left(x\right) dx\right)=\Psi\left(x\right)<\infty \mbox{ for a.e. } x\in B_{2r}. \label{eq:ra3} 
\end{eqnarray}
The last implication follows by using the definition of outer distortion \eqref{eq:outer_distortion} in \eqref{eq:ra2}.
\newline\\
From \eqref{eq:le14}, we get
\begin{eqnarray}
\lim\sup_{r\rightarrow 0}\frac{\operatorname{diam}\left[\bm{y}^{-1}\left(B_r\right)\right]}{r}&\le& C_p\left(n\right)\lim\sup_{r\rightarrow 0}\left(\fint_{B_{2r}}\Psi\left(x\right) dx\right)^{\frac{n-1}{p}}\left(\frac{\mu\left(\bm{y}^{-1}\left(B_{2r}\right)\right)}{\mu\left(B_{2r}\right)}\right)^{\frac{p-n+1}{p}}\nonumber\\
&\le& C_p\left(n\right)\left[\Psi\left(x\right)\right]^{\frac{n-1}{p}}\left[\mu'_{\bm{y}^{-1}\left(x\right)}\right]^{\frac{p-n+1}{p}}.\label{eq:le16}
\end{eqnarray}
Using the estimate \eqref{eq:ra3} and the Lebesgue's Theorem (e.g.\ \cite[Theorem 23.5]{Vaisala}) in \eqref{eq:le16}, we conclude that
\begin{eqnarray}
\lim\sup_{r\rightarrow 0}\frac{\operatorname{diam}\left[\bm{y}^{-1}\left(B_r\right)\right]}{r} < \infty \mbox{ for a.e. }  x \mbox{ in } \bm{y}\left(\Omega\right).\label{eq:ra4} 
\end{eqnarray}
This proves the almost everywhere differentiability of $\bm{y}^{-1}$ in $\bm{y}\left(\Omega\right)$.\newline\\
\textbf{Step II:} Having established differentiability almost everywhere, we next derive estimates for the weak gradient of the inverse mapping. 
Using \eqref{eq:ra2} in \eqref{eq:le16} and applying  \cite[Theorem 24.4.]{Vaisala} for $\bm{y}^{-1}$, we obtain
\begin{eqnarray}
|\nabla \bm{y}^{-1}\left(x\right)|\le C_p\left(n\right)\left[K^{O}_{\bm{y}}\left(\bm{y}^{-1}\left(x\right)\right)\right]^{\frac{n-1}{n}}\chi_{\bm{y}\left(M\right)}\left(x\right)\left[J_{\bm{y}^{-1}}\left(x\right)\right]^{\frac{1}{n}}.\label{eq:17}
\end{eqnarray}
Combining this estimate with the definition of the outer distortion coefficient and the assumption $K^{O}_{\bm{y}}\in L^{n-1}$, we conclude that
$\bm{y}^{-1}$ is itself a mapping of finite distortion.
\newline\\
\textbf{Step III.} It remains to verify continuity of the inverse. For this purpose, we prove the local Sobolev regularity $\bm{y}^{-1}\in W^{1,n}_{loc}\left(\bm{y}\left(\Omega\right)\setminus\bm{y}\left(\partial\Omega\right); \mathbb R^n\right)$.
Since $\bm{y}^{-1}$ has finite distortion and $\bm{y}^{-1}$ is differentiable a.e. in $\bm{y}\left(\Omega\right)$, we can define a set
\begin{eqnarray}
\mathcal{O} = \{x\in \bm{y}\left(\Omega\right): \bm{y}^{-1} \mbox{ is differentiable at } x \mbox{ and } J_{\bm{y}^{-1}}\left(x\right)>0\}.\label{eq:17.1}
\end{eqnarray}
Moreover, we have $\mu\left(\mathcal{O}\right) = \mu\left(\{x\in \bm{y}\left(\Omega\right):\nabla \bm{y}^{-1}\left(x\right)\ne 0\}\right)$. Therefore, it follows that
\begin{eqnarray}\int_{\bm{y}\left(\Omega\right)\setminus\bm{y}\left(\partial\Omega\right)}|\nabla \bm{y}^{-1}\left(x\right)|^ndx
&\leq&\int_{\bm{y}\left(\Omega\right)}|\nabla \bm{y}^{-1}\left(x\right)|^ndx\nonumber\\
&=& \int_{\mathcal{O}}|\nabla \bm{y}^{-1}\left(x\right)|^ndx\nonumber\\
&=& \int_{\mathcal{O}}|\nabla \bm{y}^{-1}\left(x\right)|^nJ_{\bm{y}}\left(\bm{y}^{-1}\left(x\right)\right)J_{\bm{y}^{-1}}\left(x\right)dx\nonumber\\
&\leq& \int_{\Omega}|\nabla \bm{y}^{-1}\left(\bm{y}\left(X\right)\right)|^nJ_{\bm{y}}\left(X\right)dX.\label{eq:17.1'}
\end{eqnarray}
Using Cramer's rule, we get
\begin{eqnarray}
\int_{\Omega}|\nabla \bm{y}^{-1}\left(\bm{y}\left(X\right)\right)|^nJ_{\bm{y}}\left(X\right)dX&=& \int_{\Omega}\frac{|\mbox{adj}\left(\nabla \bm{y}\left(X\right)\right)|^n}{\left[J_{\bm{y}}\left(X\right)\right]^{n-1}}dX\nonumber\\
&\underset{\eqref{eq:inner_distortion}}{=}& \int_{\Omega}K^I_{\bm{y}}\left(X\right)dX\nonumber\\
&\leq& \int_{\Omega}\left[\left(K^O_{\bm{y}}\right)\left(X\right)\right]^{n-1}dX<\infty.\label{eq:17.2}
\end{eqnarray}
In the last step we employed the pointwise comparison between the inner and outer distortion coefficients. By \eqref{eq:17.2},
\begin{eqnarray}
\int_{\bm{y}\left(\Omega\right)\setminus\bm{y}\left(\partial\Omega\right)}|\nabla \bm{y}^{-1}\left(x\right)|^ndx <\infty.\label{eq:17.3}
\end{eqnarray}
As $\bm{y}^{-1}\in W^{1,1}\left(\bm{y}\left(\Omega\right)\setminus\bm{y}\left(\partial\Omega\right); \mathbb R^n\right)$ (e.g.\ \cite[Proposition 2.5]{Bresciani21}), applying the Sobolev embedding Theorem (e.g.\ \cite[Theorem A.18]{Hencl-Koskela}) along with \eqref{eq:17.3}, we obtain $\bm{y}^{-1}\in W^{1,n}_{loc}\left(\bm{y}\left(\Omega\right)\setminus\bm{y}\left(\partial\Omega\right); \mathbb R^n\right) $.
The continuity theorem for finite distortion mappings (\cite[Theorem 2.3]{Hencl-Koskela}) therefore provides a continuous representative of $\bm{y}^{-1}$ in $\bm{y}\left(\Omega\right)\setminus\bm{y}\left(\partial\Omega\right)$, which satisfies that $\overline{\bm{y}\left(\Omega\right)\setminus\bm{y}\left(\partial\Omega\right)}=\bm{y}\left(\overline\Omega\right)=\overline{\bm{y}\left(\Omega\right)}$. Therefore $\bm{y}$ is an open mapping in $\bm{y}^{-1}\left(\bm{y}\left(\Omega\right)\setminus\bm{y}\left(\partial\Omega\right)\right)$. Since $\overline{\bm{y}^{-1}\left(\bm{y}\left(\Omega\right)\setminus\bm{y}\left(\partial\Omega\right)\right)}=\overline{\Omega}$, we obtain that $\bm{y}$ is an open mapping on $\Omega$.
\end{proof}

\begin{remark}
An important feature of the proof is that continuity of the inverse is obtained under a weaker Sobolev regularity requirement than would follow from a direct application of Morrey's embedding theorem. The additional finite distortion structure of $\bm{y}^{-1}$ is essential for this improvement.
\end{remark}

\section{Existence results and proof}\label{Existence results and proof}
% Keeping in mind the above examples, 
\noindent We now introduce the class of admissible configurations used in the existence analysis. The deformation component is based on mappings of finite distortion satisfying an integrability condition on the outer distortion coefficient that is compatible with the openness result established in Section 3. Throughout this section, we assume that the reference domain $\Omega\subset\mathbb R^3$ is open, bounded, and has Lipschitz boundary.\newline\\
For $p > 3,~q\ge 2$, the admissible deformation class is defined by
\begin{eqnarray}\label{eq:set_of_deformation}
\mathscr{Y} = \Bigg \{ \bm{y} \in W^{1,p} \left( \Omega; \mathbb R^3 \right) : \det( \nabla \bm{y}) > 0 \mbox{ a.e. in } \Omega,~K^O_{\bm{y}} \in L^{q}\left( \Omega \right),\nonumber\\ \int_{\Omega}\det(\nabla \bm{y}\left(X\right))dX\le \mu\left(\bm{y}\left(\Omega\right)\right),\nonumber\\~ \bm{y}=\bm{\overline{y}} \mbox{ on } \partial\Omega \mbox{ in the sense of trace}\Bigg \}.
\end{eqnarray}
The corresponding class of admissible magnetizations is given by
\begin{eqnarray}
\mathscr{M} = \Big \{ \Mvec \in W^{1,2} \left( \bm{y}\left(\Omega\right); \mathbb R^3 \right) : |\Mvec\left(\bm{y}\left(X\right)\right)|\det(\nabla \bm{y})=1 \mbox{ a.e. in } \Omega \Big \}.
\end{eqnarray}
The collection of admissible states then is defined as
\begin{eqnarray} 
\mathscr{A} = \Big \{ \left( \bm{y}, \Mvec \right) : \bm{y} \in \mathscr{Y} \mbox{ and }  \Mvec\in \mathscr{M} \Big \}.
\end{eqnarray}
We equip $\mathscr{A}$ with the following notion of convergence. We write  
\begin{eqnarray}
\left(\bm{y}_k, \Mvec_k\right)\rightarrow\left(\bm{y}, \Mvec\right) \mbox{ in } \mathscr{A}
\end{eqnarray}
whenever the convergences below are satisfied. 
\begin{subequations}
\begin{align}
\bm{y}_k &\rightharpoonup \bm{y} &\mbox{ weakly in } &W^{1,p}\left(\Omega; \mathbb R^3\right),\\
\chi_{\bm{y}_k\left(\Omega\right)}\Mvec_k &\rightarrow \chi_{\bm{y}\left(\Omega\right)}\Mvec &\mbox{ strongly in } &L^2\left(\mathbb R^3; \mathbb R^3\right),\\
\chi_{\bm{y}_k\left(\Omega\right)}\nabla \Mvec_k &\rightharpoonup \chi_{\bm{y}\left(\Omega\right)}\nabla\Mvec &\mbox{ weakly in } &L^2\left(\mathbb R^3; \mathbb R^{3\times 3}\right).
\end{align}
\end{subequations}
\begin{lemma}\label{lem3}
Any element in $\mathscr{Y}$ is a homeomorphism from $\Omega$ to $\bm{y}\left(\Omega\right)$.
\end{lemma}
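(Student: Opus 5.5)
To show that every $\bm{y}\in\mathscr{Y}$ is a homeomorphism of $\Omega$ onto $\bm{y}(\Omega)$, we need four things: continuity, injectivity, openness (which gives continuity of the inverse), and surjectivity, the last being automatic since the target is $\bm{y}(\Omega)$.

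\emph{Continuity.} Since $p>3=n$, Morrey's embedding (Lemma \ref{lem1'}) gives a continuous representative of $\bm{y}$, and we work with it from now on.

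\emph{Openness.} We place ourselves in the setting of \Cref{thm2} with $n=3$. Because $\Omega$ is bounded and $q\ge 2=n-1$, the condition $K^O_{\bm{y}}\in L^q(\Omega)$ implies $K^O_{\bm{y}}\in L^{2}(\Omega)$. Together with $\det(\nabla\bm{y})>0$ a.e.\ and the Ciarlet--Ne\v{c}as condition, all hypotheses of \Cref{thm2} hold. Hence $\bm{y}$ is open. In particular $\bm{y}(\Omega)$ is open, and $\bm{y}$ maps open subsets of $\Omega$ to open sets.

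\emph{Injectivity.} First we use the Lusin (N) property, obtained as in the remark after Lemma \ref{lem2} from the estimate \eqref{eq:log3}. With it, the area formula gives
\[\int_\Omega \det(\nabla\bm{y})\,dX=\int_{\mathbb R^3}N(\bm{y},\Omega,x)\,dx\ \ge\ \mu(\bm{y}(\Omega)).\]
Combined with the Ciarlet--Ne\v{c}as inequality, this forces $N(\bm{y},\Omega,x)=1$ for a.e.\ $x\in\bm{y}(\Omega)$. This is the fact already used in the proof of \Cref{thm2}.

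Next we upgrade a.e.\ injectivity to everywhere injectivity by openness. Suppose $\bm{y}(X_1)=\bm{y}(X_2)=x_0$ with $X_1\ne X_2$. Choose disjoint balls $B(X_1,\rho)$ and $B(X_2,\rho)$ compactly contained in $\Omega$. Their images are open sets both containing $x_0$, so their intersection $U$ is a nonempty open set, and $\mu(U)>0$. Every $x\in U$ has at least two preimages, so $N(\bm{y},\Omega,\cdot)\ge 2$ on $U$, contradicting $N=1$ a.e. Hence $\bm{y}$ is injective.

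\emph{Conclusion.} Thus $\bm{y}:\Omega\to\bm{y}(\Omega)$ is a continuous, open bijection. Openness means that for every open $V\subset\Omega$ the set $(\bm{y}^{-1})^{-1}(V)=\bm{y}(V)$ is open, so $\bm{y}^{-1}$ is continuous. Therefore $\bm{y}$ is a homeomorphism onto $\bm{y}(\Omega)$.

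\emph{Main obstacle.} The delicate step is openness, and we take it entirely from \Cref{thm2}. The only points requiring care are the passage from $L^q$ to $L^{n-1}$ integrability of $K^O_{\bm{y}}$, which needs boundedness of $\Omega$, and the Lusin (N) property that makes the equality $N=1$ a.e.\ rigorous. The boundary condition $\bm{y}=\overline{\bm{y}}$ is not needed for this lemma.
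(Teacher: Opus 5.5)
Your proof is correct and follows the same route as the paper. Both arguments get openness from Theorem~\ref{thm2} (your remark that $L^q(\Omega)\subset L^{2}(\Omega)$ on the bounded domain is exactly what is needed), a.e.\ injectivity from the Ciarlet--Ne\v{c}as condition together with the Lusin $(N)$ property, injectivity everywhere via openness, and then the homeomorphism property. The only difference is that you prove these ingredients directly: $N(\bm{y},\Omega,\cdot)=1$ a.e.\ via the area formula, the disjoint-ball argument for global injectivity, and continuity of the inverse from openness. The paper instead cites Grandi et al.\ (including their Lemma~3.3) and the invariance of domain theorem.
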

\begin{proof}
Consider an arbitrary deformation $\tilde{\bm{y}} \in\mathscr{Y}$. Since $\tilde{\bm{y}}$ satisfies the Ciarlet-Nečas condition and $\det(\nabla\bm{y})>0$ a.e.\ in $\Omega$, injectivity holds a.e.\ in $\Omega$ by the result of \cite{Grandi2019}. Moreover, $\tilde{\bm{y}}$ is a nonconstant mapping of finite distortion whose outer distortion belongs to $L^q\left(\Omega\right)$ with $q \ge 2$. Therefore, \Cref{thm2} implies that $\tilde{\bm{y}}$ is an open mapping. \newline
Furthermore, \cite[Theorem 4.2]{Hencl-Koskela} yields the  Lusin $\left(\it{N}\right)$ condition. Combining this fact with \cite[Lemma 3.3]{Grandi2019} shows that $\tilde{\bm{y}}$ is injective on all of $\Omega$. Finally, the invariance domain theorem (see e.g.\ \cite{ciarlet2021mathematical}) implies that the continuous representative of $\tilde{\bm{y}}$ is a homeomorphism.
\end{proof}
\begin{remark}
Since the magnetoelastic energy is formulated in mixed Eulerian-Lagrangian coordinates, the analysis naturally involves the inverse deformation. Consequently, suitable regularity and topological properties of the inverse mapping are essential for the existence theory. The homeomorphism result established in Lemma \ref{lem3} provides the required framework and considerably simplifies the treatment of compositions involving deformation and magnetization.
\end{remark}
We are now ready to formulate the main existence result of the paper.\\
\begin{theorem}[Existence theorem]\label{thm1}
Assume that $\mathscr{A}$ is nonempty. Suppose that $\mathcal{W}\left(\nabla\bm{y}, \Mvec\right)$ satisfies \eqref{eq:poly}, \eqref{eq:coer} and \eqref{eq:beh}. Then $\mathscr{E}_{mag}\left(\bm{y}, \Mvec\right)$ admits a minimum on the set $\mathscr{A}$. 
\end{theorem}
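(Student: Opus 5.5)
The proof follows the direct method of the calculus of variations. Since $\mathscr{A}$ is nonempty and $\mathscr{E}_{mag}\ge 0$ (by \eqref{eq:coer} with $\alpha_2>0$), we take a minimizing sequence $(\bm{y}_k,\Mvec_k)\subset\mathscr{A}$ with $\mathscr{E}_{mag}(\bm{y}_k,\Mvec_k)\to\inf_{\mathscr{A}}\mathscr{E}_{mag}<\infty$. The plan has three steps: compactness in the topology of convergence in $\mathscr{A}$, stability of the admissible class under that convergence, and lower semicontinuity of each of the three energy terms.

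\textbf{Compactness.} From \eqref{eq:coer} we obtain uniform bounds on $\|\nabla\bm{y}_k\|_{L^p}$, $\|K^O_{\bm{y}_k}\|_{L^q}$ and $\|\det(\nabla\bm{y}_k)^{-s}\|_{L^1}$. Together with the boundary condition $\bm{y}_k=\overline{\bm{y}}$ and Poincaré's inequality, this gives a uniform $W^{1,p}$ bound. Since $p>3$, Morrey's embedding yields a subsequence with $\bm{y}_k\rightharpoonup\bm{y}$ in $W^{1,p}$ and $\bm{y}_k\to\bm{y}$ uniformly on $\overline\Omega$. Weak continuity of minors gives $\mathbf{Cof}\nabla\bm{y}_k\rightharpoonup\mathbf{Cof}\nabla\bm{y}$ in $L^{p/2}$ and $\det\nabla\bm{y}_k\rightharpoonup\det\nabla\bm{y}$ in $L^{p/3}$. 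The bound on $\det^{-s}$, combined with convexity and Fatou's lemma as in Ball's argument, gives $\det\nabla\bm{y}>0$ a.e.

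To see that $\bm{y}\in\mathscr{Y}$, we write $|\nabla\bm{y}_k|^3=K^O_{\bm{y}_k}\det\nabla\bm{y}_k$. The functional $(F,J)\mapsto |F|^{3q}/J^{q}$ is jointly convex, so $\int K^O_{\bm{y}}{}^q\le\liminf\int K^O_{\bm{y}_k}{}^q$, and hence $K^O_{\bm{y}}\in L^q$. The Ciarlet--Ne\v{c}as condition passes to the limit because of uniform convergence and weak convergence of the determinants; this is the standard argument using $\mu(\bm{y}(\Omega))\ge\limsup\mu(\bm{y}_k(\Omega))$, which is valid since the traces coincide and degree theory identifies the images. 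Lemma~\ref{lem3} then shows that $\bm{y}$ and all the $\bm{y}_k$ are homeomorphisms.

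For the magnetizations, the exchange and magnetostrictive terms bound $\chi_{\bm{y}_k(\Omega)}\nabla\Mvec_k$ in $L^2(\mathbb R^3)$. The constraint gives $|\Mvec_k|=1/\det\nabla\bm{y}_k\circ\bm{y}_k^{-1}$, so the change of variables $\int_{\bm{y}_k(\Omega)}|\Mvec_k|^2dx=\int_\Omega\det(\nabla\bm{y}_k)^{-1}dX$ is bounded by the $\det^{-s}$ control with $s>1$.

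\textbf{Main obstacle.} The hardest step is the strong $L^2$ convergence $\chi_{\bm{y}_k(\Omega)}\Mvec_k\to\chi_{\bm{y}(\Omega)}\Mvec$. The domains $\bm{y}_k(\Omega)$ vary, and only interior local compactness is available. The plan is to use uniform convergence and the homeomorphism property to show $\chi_{\bm{y}_k(\Omega)}\to\chi_{\bm{y}(\Omega)}$ in $L^1$; here we use $\mu(\bm{y}(\partial\Omega))=0$, which follows from the Lusin (N) property. On each compact $V\subset\subset\bm{y}(\Omega)$ we have $V\subset\bm{y}_k(\Omega)$ for large $k$, and Rellich's theorem gives strong convergence on $V$. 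Equi-integrability near the boundary comes from the higher integrability of $|\Mvec_k|$, obtained either via the $\det^{-s}$ bound with $s>1$ or via Sobolev embedding of the $W^{1,2}$ bound on the extension. A diagonal argument then identifies a limit $\Mvec\in W^{1,2}(\bm{y}(\Omega))$.

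Next we show $\Mvec_k\circ\bm{y}_k\to\Mvec\circ\bm{y}$ in measure on $\Omega$. For this we follow the argument of \cite{MK}, changing variables with the equiintegrable Jacobians and using the uniform convergence of $\bm{y}_k$ and of the inverses. The inverses converge uniformly on compacts because of the $W^{1,3}$ bound on $\bm{y}_k^{-1}$ from Step III of Theorem~\ref{thm2}, which holds uniformly since $K^O\in L^q$ with $q\ge 2$. Combined with the a.e. convergence of $\det\nabla\bm{y}_k$ along a subsequence, this passes the saturation constraint \eqref{eq:sat1} to the limit. A.e. convergence of the determinants is delicate because only weak convergence is given. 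If necessary, we instead pass to the limit in the equivalent form $|\Mvec\circ\bm{y}|\det\nabla\bm{y}=1$, using Mazur's lemma together with the strong convergence of the composition.

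\textbf{Lower semicontinuity.} The magnetostrictive term is handled by polyconvexity \eqref{eq:poly}. The minors converge weakly, $\Mvec_k\circ\bm{y}_k$ converges in measure, and $\overline{\mathcal W}$ is continuous, convex in the first three arguments and nonnegative. A standard Ioffe-type theorem therefore yields lower semicontinuity, with \eqref{eq:beh} ensuring the limit is consistent on $\{\det=0\}$, which is a null set anyway. The exchange term is weakly lower semicontinuous in $L^2(\mathbb R^3)$. For the stray field, \eqref{eq:maxwell} makes $\Mvec\mapsto\Hvec_{\Mvec}$ a bounded linear operator (a Helmholtz projection) on $L^2$, so strong convergence of $\chi\Mvec_k$ gives convergence of $\int|\Hvec_{\Mvec_k}|^2$. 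Combining these, $(\bm{y},\Mvec)\in\mathscr{A}$ is a minimizer.
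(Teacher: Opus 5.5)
Your proposal is correct and has the same architecture as the paper, whose proof of Theorem~\ref{thm1} applies the direct method to Theorem~\ref{compact}, Theorem~\ref{saturation}, Lemma~\ref{lem-str} and Theorem~\ref{lower-semi}. However, several key sub-arguments are genuinely different.

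\textbf{Limit deformation.} You obtain $\det\nabla\bm y>0$ from weak lower semicontinuity of the convex functional $\int_\Omega\det(\nabla\bm y)^{-s}$. You obtain $K^O_{\bm y}\in L^q$ from joint convexity of $(F,J)\mapsto|F|^{3q}/J^q$, which holds since $3q\ge q+1$. The paper instead argues by contradiction using a.e.\ convergence of $\det\nabla\bm y_k$ on $V_0$, which weak convergence does not provide, and it appeals to biting convergence. You also handle the right-hand side of the Ciarlet--Ne\v{c}as inequality, via $\limsup_k\mu(\bm y_k(\overline\Omega))\le\mu(\bm y(\overline\Omega))$ and the fact that, with Dirichlet data, degree theory makes all images $\bm y_k(\Omega)$ coincide up to the null set $\overline{\bm y}(\partial\Omega)$. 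The paper's equi-integrability argument treats only the left-hand side.

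\textbf{Magnetizations.} The paper controls the layer $\bm y_k(\Omega)\setminus O_\delta(\bm y)$ through uniform convergence of the inverses and an $L^\infty$ bound on $\chi_{\Omega^\delta_k}-\chi_\Omega$. That is a difference of characteristic functions, so its $L^\infty$ norm is $0$ or $1$. The paper then proves the saturation constraint first, by integrating over balls and differentiating, and only afterwards the convergence of $\Mvec_k\circ\bm y_k$. You instead use $\int_{\bm y_k(\Omega)}|\Mvec_k|^{1+s}\,dx=\int_\Omega\det(\nabla\bm y_k)^{-s}\,dX$, which with $s>1$ gives equi-integrability of $|\Mvec_k|^2$ directly. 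You get the compositions from the change of variables $\int_\Omega|\Mvec_k\circ\bm y_k-\tilde\Mvec\circ\bm y_k|\,dX=\int_{\bm y_k(\Omega)}|\Mvec_k-\tilde\Mvec|\,|\Mvec_k|\,dx$ with $\tilde\Mvec\in C_c(\bm y(\Omega))$, and pass the constraint to the limit afterwards.

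\textbf{Stray field and comparison.} Boundedness of the Helmholtz projection gives you strong, not merely weak, convergence of $\Hvec_{\Mvec_k}$. Your route rests every step on an explicit convexity or equi-integrability estimate tied to $p>3$, $q\ge 2$, $s>1$. The paper's route is organized around the homeomorphism property and the inverse maps; that is its theme, but it is also where its estimates are least explicit.

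\textbf{Two simplifications of your own sketch.} First, a uniform $W^{1,3}$ bound on $\bm y_k^{-1}$ does not by itself give equicontinuity in dimension three; you would need the oscillation lemma for monotone maps. You do not need it, though. The change-of-variables estimate above involves no inverse maps, and locally uniform convergence of $\bm y_k^{-1}$ follows anyway from uniform convergence $\bm y_k\to\bm y$, local degree, and continuity of $\bm y^{-1}$.

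Second, the a.e.\ convergence of determinants that you call delicate is automatic. Once $\Mvec_k\circ\bm y_k\to\Mvec\circ\bm y$ a.e., the constraint gives $\det\nabla\bm y_k=1/|\Mvec_k\circ\bm y_k|\to 1/|\Mvec\circ\bm y|$ a.e. The set where $\Mvec\circ\bm y=0$ is null by Fatou and the $L^{p/3}$ bound. Equi-integrability then identifies this limit with the weak limit $\det\nabla\bm y$, so Mazur's lemma is not needed.
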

The proof is carried out using the direct method of the calculus of variations. Its main ingredients are a compactness theorem for energy-bounded sequences and a lower semicontinuity result for the magnetoelastic energy functional. These two components are established in \Cref{compactness} and \Cref{lower_semi} and are combined in \Cref{Proof of existence theorem} to obtain the existence of minimizers.
\subsection{Compactness}\label{compactness}
\begin{theorem}[Compactness]\label{compact}
Let $\left(\bm{y}_k, \Mvec_k\right)_{k\in\mathbb N}\subset\mathscr{A}$ such that $\mathscr{E}_{mag}\left(\bm{y}_k, \Mvec_k\right)\le\Lambda$ for some constant $\Lambda>0$, and for any $k\in\mathbb N$. Then there exists $\left(\bm{y}, \Mvec\right)$ in $\mathscr{A}$ such that, up to a subsequence
\begin{align}
\left(\bm{y}_k, \Mvec_k\right)\rightarrow\left(\bm{y}, \Mvec\right) \mbox{ in } \mathscr{A}.
\end{align}
\end{theorem}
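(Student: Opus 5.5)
We proceed by the direct method. The energy bound gives uniform control of the deformations, the magnetizations and their gradients, and the main work is transferring convergence between the moving Eulerian domains $\bm{y}_k(\Omega)$ and the reference configuration.

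\textbf{Step 1: bounds and convergence of the deformations.} By the coercivity \eqref{eq:coer} and $\mathscr{E}_{mag}(\bm{y}_k,\Mvec_k)\le\Lambda$, the norms $\|\nabla\bm{y}_k\|_{L^p}$, $\|K^O_{\bm{y}_k}\|_{L^q}$ and $\|(\det\nabla\bm{y}_k)^{-s}\|_{L^1}$ are uniformly bounded. Together with the boundary condition $\bm{y}_k=\bar{\bm{y}}$ and Poincaré's inequality, $(\bm{y}_k)$ is bounded in $W^{1,p}$ with $p>3$. Hence, up to a subsequence, $\bm{y}_k\rightharpoonup\bm{y}$ in $W^{1,p}$ and $\bm{y}_k\to\bm{y}$ uniformly on $\overline\Omega$ by Morrey's embedding. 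We also get weak convergence of minors, $\mathbf{Cof}\nabla\bm{y}_k\rightharpoonup\mathbf{Cof}\nabla\bm{y}$ and $\det\nabla\bm{y}_k\rightharpoonup\det\nabla\bm{y}$ in $L^{p/2}$ and $L^{p/3}$ respectively.

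\textbf{Step 2: $\bm{y}\in\mathscr{Y}$.} For the Jacobian, the bound on $(\det\nabla\bm{y}_k)^{-s}$ and convexity of $t\mapsto t^{-s}$ (via Mazur's lemma) give $\det\nabla\bm{y}>0$ a.e. For the distortion, we write $|\nabla\bm{y}|^{3q}/(\det\nabla\bm{y})^{q}$ as the function $(F,J)\mapsto|F|^{3q}/J^q$, which is convex and lower semicontinuous on $\mathbb R^{3\times3}\times(0,\infty)$. Weak lower semicontinuity then yields $K^O_{\bm{y}}\in L^q$. For the Ciarlet--Ne\v{c}as condition, it passes to the limit by the standard argument: uniform convergence gives $\limsup\mu(\bm{y}_k(\Omega))\le\mu(\bm{y}(\Omega))$ up to the null set $\bm{y}(\partial\Omega)$, while $\int\det\nabla\bm{y}_k\to\int\det\nabla\bm{y}$. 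The trace condition is preserved under weak convergence. By Lemma~\ref{lem3}, $\bm{y}$ and every $\bm{y}_k$ are homeomorphisms onto their images.

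\textbf{Step 3: magnetizations.} The exchange term gives $\|\nabla\Mvec_k\|_{L^2(\bm{y}_k(\Omega))}\le C$. The saturation constraint and the change of variables (valid since $N\le1$ and the Lusin condition holds) give
\[
\int_{\bm{y}_k(\Omega)}|\Mvec_k|^2\,dx=\int_\Omega(\det\nabla\bm{y}_k)^{-1}\,dX,
\]
which is bounded by the $(\det)^{-s}$ bound with $s>1$ and Hölder's inequality. Extending by zero, $\chi_{\bm{y}_k(\Omega)}\Mvec_k$ and $\chi_{\bm{y}_k(\Omega)}\nabla\Mvec_k$ are bounded in $L^2(\mathbb R^3)$, so they converge weakly to some $\bm{m}$ and $\bm{G}$.

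\textbf{Step 4: identifying the limits.} This is the main obstacle. First, $\chi_{\bm{y}_k(\Omega)}\to\chi_{\bm{y}(\Omega)}$ in $L^1$: uniform convergence and openness/homeomorphy give that every compact $V\subset\bm{y}(\Omega)$ lies in $\bm{y}_k(\Omega)$ for large $k$, by a degree argument using $\deg(\cdot,\bm{y}_k,\Omega)=\deg(\cdot,\bar{\bm{y}},\Omega)$. Also $\mu(\bm{y}(\partial\Omega))=0$. On such $V$, Rellich gives $\Mvec_k\to\Mvec$ strongly in $L^2(V)$ and $\nabla\Mvec_k\rightharpoonup\nabla\Mvec$, so $\bm{m}=\chi_{\bm{y}(\Omega)}\Mvec$ and $\bm{G}=\chi_{\bm{y}(\Omega)}\nabla\Mvec$ with $\Mvec\in W^{1,2}(\bm{y}(\Omega))$. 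Strong $L^2(\mathbb R^3)$ convergence then requires the mass of $|\Mvec_k|^2$ on the thin sets $\bm{y}_k(\Omega)\setminus V$ to vanish uniformly. We plan to obtain this from the equi-integrability of $(\det\nabla\bm{y}_k)^{-1}$ (from $s>1$), pulled back through the change of variables to the shrinking sets $\bm{y}_k^{-1}(\bm{y}_k(\Omega)\setminus V)$, whose measure tends to zero by uniform convergence.

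Finally, the saturation constraint $|\Mvec(\bm{y})|\det\nabla\bm{y}=1$ must be checked. We will show that $\Mvec_k\circ\bm{y}_k\to\Mvec\circ\bm{y}$ in measure on $\Omega$: approximate $\Mvec$ by continuous functions, use the uniform convergence of $\bm{y}_k$, and control the errors by changes of variables, using $\det\nabla\bm{y}_k>0$ and the Lusin (N$^{-1}$) property coming from the inverse being of finite distortion (Theorem~\ref{thm2}, Step II). Since $|\Mvec_k\circ\bm{y}_k|=(\det\nabla\bm{y}_k)^{-1}$, we then need $\det\nabla\bm{y}_k\to\det\nabla\bm{y}$ a.e. We expect this to be the delicate point. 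We would try to obtain it by passing the constraint in the form $|\Mvec_k\circ\bm{y}_k|^{-1}=\det\nabla\bm{y}_k$ weakly, combining convergence in measure of the left side with weak $L^1$ convergence of the right side to identify the limits.
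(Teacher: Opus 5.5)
Your proposal is correct. Its architecture is the paper's: weak closedness of $\mathscr{Y}$, homeomorphy via Lemma~\ref{lem3}, Rellich on compact subsets of $\bm{y}(\Omega)$, and smallness of the boundary layer. The genuine difference is how you pass the saturation constraint to the limit.

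The paper argues linearly. For every ball $B\subset\Omega$, the change of variables gives $\int_{\bm{y}_k(B)}|\Mvec_k|\,dx=\int_B|\Mvec_k\circ\bm{y}_k|\det\nabla\bm{y}_k\,dX=\mu(B)$. The left side converges by the $L^1$ convergence of $\chi_{\bm{y}_k(B)}\Mvec_k$, so $\int_B|\Mvec\circ\bm{y}|\det\nabla\bm{y}\,dX=\mu(B)$ for all balls, and Lebesgue differentiation concludes. This uses only the Eulerian convergence and no pointwise information on $\det\nabla\bm{y}_k$, so your ``delicate point'' never arises. Your route through convergence in measure of $\Mvec_k\circ\bm{y}_k$ is longer but yields more. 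It also gives $\det\nabla\bm{y}_k\to\det\nabla\bm{y}$ in $L^1$ and a.e.\ convergence of the compositions. The paper establishes these separately afterwards (Theorem~\ref{saturation}) for the lower semicontinuity of the magnetostrictive term.

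Two points should be made explicit.

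First, the qualitative $(N^{-1})$ property of each $\bm{y}_k$ (or of $\bm{y}_k^{-1}$ via Theorem~\ref{thm2}) gives no control uniform in $k$. What you need is
\[
\mu\bigl(\bm{y}_k^{-1}(E)\bigr)\le\Bigl(\int_{\bm{y}_k^{-1}(E)}\det\nabla\bm{y}_k\,dX\Bigr)^{1/2}\Bigl(\int_\Omega\frac{dX}{\det\nabla\bm{y}_k}\Bigr)^{1/2}\le C\,\mu(E)^{1/2},
\]
which follows from Cauchy--Schwarz, injectivity and your Step 3 bound. The same holds for $\bm{y}$, using $\int_\Omega(\det\nabla\bm{y})^{-s}\,dX<\infty$ from your Step 2.

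Second, with this estimate the identification closes as you propose. $\det\nabla\bm{y}_k=|\Mvec_k\circ\bm{y}_k|^{-1}$ converges in measure to $|\Mvec\circ\bm{y}|^{-1}$, which is a.e.\ finite by Fatou. The $L^{p/3}$ bound gives equi-integrability, so Vitali gives $L^1$ convergence, and uniqueness of weak limits yields $|\Mvec\circ\bm{y}|\det\nabla\bm{y}=1$ a.e.

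Elsewhere your arguments compare favourably with the paper's:
\begin{itemize}
\item Your convexity arguments for $\det\nabla\bm{y}>0$ and $K^O_{\bm{y}}\in L^q$ are a self-contained replacement for the paper's appeal to biting convergence. Note that $(F,J)\mapsto|F|^{3q}J^{-q}$ is convex because $3q\ge q+1$.
\item Equi-integrability of $(\det\nabla\bm{y}_k)^{-1}$ is the right way to control the boundary layer. An $L^1$--$L^\infty$ splitting cannot work there, since $\|\chi_{\Omega^\delta_k}-\chi_\Omega\|_{L^\infty}$ does not tend to zero.
\end{itemize}

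Finally, your degree identity gives more than you use. All $\bm{y}_k$ and $\bm{y}$ are homeomorphisms of $\Omega$, continuous on $\overline\Omega$, with the same boundary values. Hence each image is the set of $x\notin\overline{\bm{y}}(\partial\Omega)$ where the boundary-determined degree equals $1$. So $\bm{y}_k(\Omega)=\bm{y}(\Omega)$ for every $k$, and the moving-domain issue disappears.
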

\begin{proof}
The goal is to show that every sequence with uniformly bounded energy admits a subsequence converging to an admissible limit. For the deformation component, the argument relies on compactness properties of mappings of finite distortion together with the biting convergence framework. A key feature of the present setting is that the limit deformation retains the homeomorphic character of the approximating sequence. This is a consequence of the finite distortion structure and should be contrasted with the general situation for Sobolev homeomorphisms, where injectivity may be lost under weak convergence; see Remark~\ref{re6}.\newline
Once the homeomorphism property of the limiting deformation has been established, the compactness analysis of the magnetization becomes considerably simpler. In particular, continuity of the inverse deformation allows us to handle compositions in a direct manner and avoids several technical difficulties encountered in earlier existence theories.\\
\noindent\newline \textbf{Step I: Compactness in the $\bm{y}$-component}\\
The coercivity assumption \eqref{eq:coer} immediately yields
\begin{eqnarray}
\mathscr{E}_{mag}\left(\bm{y}_k, \Mvec_k\right) \ge  \alpha_1\left(||\nabla\bm{y}_k||_{L^p} + ||K^{O}_{\bm{y}_k}||_{L^q} + \left\lVert\dfrac{1}{\det(\nabla \bm{y}_k)}\right\lVert_{L^s}\right)\nonumber\newline\\
~~~~~~+ \alpha||\nabla\Mvec_k||_{L^2} + \dfrac{\mu_0}{2}||\Hvec_{\Mvec_k}||_{L^2} + \alpha_2\label{eq:en_bound}
\end{eqnarray}
for $p>3, q\ge 2, s>1$.\newline
By \eqref{eq:en_bound}, there exists a constant $\Lambda>0$ such that 
\begin{eqnarray}
\sup_{k\in\mathbb N}||\nabla \bm{y}_k||_{L^p\left(\Omega;~\mathbb R^{3\times 3}\right)}\le \sup_{k\in\mathbb N}\mathscr{E}_{mag}\left(\bm{y}_k, \Mvec_k\right) \le \Lambda\label{eq:compact1}
\end{eqnarray}
for $p>3$.\newline\\
Choose an arbitrary deformation $\tilde{\bm{y}}\in\mathscr{Y}$. By the Poincar\'e inequality (see e.g.\ \cite[Section 4, Theorem 2]{Evans-Gariepy}), there exists a constant $C\left(p, n\right)>0$ (depending on $p$ and $n$) such that
\begin{eqnarray}
\sup_{k\in\mathbb{N}}||\bm{y}_k||_{L^p\left(\Omega;~\mathbb R^3\right)}\le C\left(p, n\right)||\nabla \bm{y}_k - \nabla \tilde{\bm{y}}||_{L^p\left(\Omega;~\mathbb R^3\right)} + \tilde{C}\le C_1\label{eq;compact2}
\end{eqnarray}
for some constant $C_1>0$.
An application of the Banach-Alaoglu theorem therefore provides a subsequence, not relabelled, such that
\begin{eqnarray}
\bm{y}_k\rightharpoonup \bm{y} \mbox{ weakly in } W^{1, p}\left(\Omega; \mathbb R^3\right).\label{eq:compact3}
\end{eqnarray}
It remains to verify that the weak limit belongs to the admissible deformation class $\mathscr{Y}$. Due to the coerciveness \eqref{eq:coer} in $\mathcal{W}$, 
\begin{eqnarray}
\sup_{k\in\mathbb N}\left\lVert K^O_{{\bm{y}}_k}\right\lVert_{L^q\left(\Omega\right)}<\infty \mbox{ for } q\ge 2.\label{eq:compact4}
\end{eqnarray}By the Banach-Alaoglu Theorem, up to a subsequence
\begin{eqnarray}
K^O_{{\bm{y}}_k}\rightharpoonup\tilde{K} \mbox{ weakly in } L^q\left(\Omega\right) \mbox{ for } q\ge 2.\label{eq:compact5}
\end{eqnarray}
Using the property of biting convergence \cite{Gehring1999} and the lower semicontinuity of norm, we have
\begin{eqnarray}
||K^O_{\bm{y}}||_{L^q}\le||\tilde{K}||_{L^q}\le\liminf_{k\rightarrow\infty}||K_{\bm{y}_k}||_{L^q}<\infty.
\end{eqnarray}
In the above, the third implication follows from the weak lower-semicontinuity property of the norm.\newline\\
The next step is to establish positivity of the Jacobian determinant almost everywhere in $\Omega$. Due to the weak continuity of Jacobian minors (see e.g.\ \cite[Theorem~8.20]{dacorogna2007direct}), 
\begin{eqnarray}
\det(\nabla \bm{y}_k)\rightharpoonup\det(\nabla \bm{y}) \mbox{ weakly in } L^{p/3}\left(\Omega\right) \mbox{ for } p>3.\label{eq:compact10}
\end{eqnarray}
Then for any measurable set $V\subset\Omega$, it follows that
\begin{eqnarray}
\int_{V}\det(\nabla \bm{y})dX=\lim_{k\rightarrow\infty}\int_{V}\det(\nabla \bm{y}_k)dX\ge 0.\label{eq:compact11}
\end{eqnarray}
Therefore, $\det(\nabla \bm{y})\ge 0$ in $\Omega$. Suppose, for contradiction, that $\det(\nabla \bm{y})=0$ on a nonzero measurable set $V_0\subset\Omega$. Then, up to subsequences
\begin{eqnarray}
\det(\nabla \bm{y}_k)\rightarrow 0 \mbox{ a.e. in } V_0,\label{eq:compact12}\\
\frac{1}{\det(\nabla \bm{y}_k)}\rightarrow +\infty \mbox{ a.e. in } V_0.\label{eq:compact13}
\end{eqnarray}
Applying Fatou's lemma (e.g.\ \cite[Section 1, Theorem 1]{Evans-Gariepy}) in \eqref{eq:compact13}, we obtain
\begin{eqnarray}
\liminf_{k\rightarrow\infty}\int_{V_0}\frac{1}{\det(\nabla \bm{y}_k)}dX=+\infty,\label{eq:compact14}
\end{eqnarray}
which contradicts the fact that the energy functional is bounded. Hence, $\det(\nabla \bm{y})>0$ a.e. in $\Omega$. Again, due to \eqref{eq:compact10},
\begin{eqnarray}
\sup_{k\in\mathbb N}||\det(\nabla \bm{y}_k)||_{L^{\frac{p}{3}}\left(\Omega\right)}<\infty \mbox{ for } p>3.\label{eq:compact15}
\end{eqnarray}
By Proposition 2.27 in \cite{Fonseca-Leoni}, together with \eqref{eq:compact15}, it follows that
\begin{eqnarray}
\big\{\det(\nabla \bm{y}_k): \Omega\rightarrow [0,\infty] \mbox{ for } k\in\mathbb N\big\} \mbox{ is } \frac{p}{3} \mbox{-equi-integrable for } p>3.\label{eq:compact16}
\end{eqnarray}
For every $k\in \mathbb N$, it holds that
\begin{eqnarray}
\int_{\Omega}|\det(\nabla\bm{y}_k)|dX\le\mu\left(\bm{y}_k\left(\Omega\right)\right).\label{eq:compact17}
\end{eqnarray}
Due to claim \eqref{eq:compact16}, passing to the limit as $k\rightarrow\infty$ in \eqref{eq:compact17}, we obtain
\begin{eqnarray}
\int_{\Omega}|\det(\nabla\bm{y})|dX\le\mu\left(\bm{y}\left(\Omega\right)\right).\label{eq:compact18}  
\end{eqnarray}
To identify the boundary values of the limit deformation, we employ the compactness of the trace operator $\gamma: W^{1,\frac{3}{2}}\rightarrow L^2\left(\partial\Omega\right)$ (see e.g.\ \cite[Theorem 6.1-7]{ciarlet2021mathematical}) to get
\begin{eqnarray}
\gamma\left(\bm{y}_k\right)\rightarrow\gamma\left(\bm{y}\right) \mbox{ strongly in } L^2\left(\partial\Omega\right).
\end{eqnarray}
Up to a subsequence,
\begin{eqnarray}
\gamma\left(\bm{y}_k\right)\rightarrow\gamma\left(\bm{y}\right) \mbox{ a.e. in } \Omega.
\end{eqnarray}
Hence, $\bm{y}=\overline{\bm{y}}$ on $\partial\Omega$ in the sense of trace. Consequently, the admissible deformation class $\mathscr{Y}$ is sequentially weakly closed.
\newline
\begin{remark}\label{re6} In general, weak convergence in Sobolev spaces does not preserve the homeomorphism property. Molchanova and Vodopyanov \cite{Molchanova} provides an example:  $\phi_k\left(x\right) = |x|^{k-1}x$ is a sequence of homeomorphic functions defined on a punctured unit ball, and the limit mapping $\phi_0\left(x\right) = 0$ is not injective.
\end{remark}
\noindent \textbf{Step II: Compactness in $\Mvec$-component}\\
Due to Morrey's embedding, cf. \cite{ciarlet2021mathematical}, up to a subsequence
\begin{eqnarray}
\bm{y}_k\rightarrow\bm{y} \mbox{ uniformly in } C\left(\overline{\Omega}; \mathbb R^3\right).\label{eq:mcom1}
\end{eqnarray}
For $\delta > 0$ sufficiently small, consider a collection of domains 
\begin{displaymath}
O_{\delta}\left(\bm{y}\right) = \{ z \in \bm{y}\left(\Omega\right): \operatorname{dist}\left(z, \partial \bm{y}\left(\Omega\right) \right) > \delta \}. %\left(\delta>0 \mbox{ is very small }\right).
\end{displaymath} Thanks to the Lindel\"of Theorem \cite[Theorem A.8]{Fonseca-Leoni}, we can cover $\bm{y}\left(\Omega\right)$ such that
\begin{eqnarray}
\bigcup_{n\in\mathbb N}O_{\delta=\frac{1}{n}}\left(\bm{y}\right) = \bm{y}\left(\Omega\right).
\end{eqnarray}
The uniform convergence in \eqref{eq:mcom1} implies that, for every fixed $\delta >0$, one may assume that $O_{\delta}\left(\bm{y}\right)\subset\bm{y}_k\left(\Omega\right)$ for all sufficiently large $k$. We first prove that, for every fixed $\delta > 0$, there exists a subsequence (without re-indexing) such that
\begin{eqnarray}
\Mvec_k \rightharpoonup \Mvec \mbox{ weakly in } W^{1,2}\left(O_{\delta}\left(\bm{y}\right); \mathbb R^3\right).\label{eq:mcom3}
\end{eqnarray}
The boundedness of the magnetization sequence follows from the saturation constraint together with the energy estimate. Indeed, for any fixed $\delta > 0$, it holds that
\begin{eqnarray}
\sup_{k\in\mathbb N}\int_{O_{\delta}\left(\bm{y}\right)} |\Mvec_k\left(x\right)|^2 dx
&\le& \sup_{k\in\mathbb N}\int_{\bm{y}\left(\Omega\right)}|\Mvec_k\left(x\right)|^2dx\nonumber\\ &\le&\sup_{k\in\mathbb N} \int_{\Omega}|\Mvec_k\left(\bm{y}_k\left(X\right)\right)|^2 \det(\nabla \bm{y}_k\left(X\right))dX\nonumber\\
&\underset{\eqref{eq:sat1}}{\leq}&\sup_{k\in\mathbb N} \int_{\Omega} \dfrac{1}{\det(\nabla \bm{y}_k\left(X\right))}dX <\infty.\label{eq:mcom4}
\end{eqnarray}
The above bound is obtained using \eqref{eq:sat1} followed by applying the coerciveness \eqref{eq:coer} in $\mathcal{W}$. Applying the Banach-Alaoglu theorem, we obtain, up to a subsequence 
\begin{eqnarray}
\Mvec_k \rightharpoonup \Mvec \mbox{ weakly in } W^{1,2}\left( O_{\delta}\left(\bm{y}\right); \mathbb R^3\right).\label{eq:mcom5}
\end{eqnarray}
Due to Rellich-Kondrachov embedding, there exists a subsequence (without re-indexing) such that 
\begin{eqnarray}
\Mvec_k \rightarrow \Mvec \mbox{ strongly in } L^2\left( O_{\delta}\left(\bm{y}\right); \mathbb R^3\right).\label{eq:mcom5'}
\end{eqnarray}
We next establish the convergences
\begin{eqnarray}
\chi_{\bm{y}_k\left(\Omega\right)}\Mvec_k &\rightarrow \chi_{\bm{y}\left(\Omega\right)}\Mvec &\mbox{ strongly in } L^2\left( \mathbb R^3; \mathbb R^3\right),\label{eq:mcom6}\\
\chi_{\bm{y}_k\left(\Omega\right)}\nabla \Mvec_k &\rightharpoonup \chi_{\bm{y}\left(\Omega\right)}\nabla \Mvec &\mbox{ weakly in } L^2\left( \mathbb R^3; \mathbb R^{3\times 3}\right).\label{eq:mcom7}
\end{eqnarray}
\noindent To show \eqref{eq:mcom6}, we decompose the difference according to
\begin{eqnarray} 
& &\chi_{\bm{y}_k\left(\Omega\right)} \Mvec_k - \chi_{\bm{y}\left(\Omega\right)}\Mvec\nonumber\\
&=& \left(\chi_{\bm{y}_k\left(\Omega\right)} - \chi_{O_{\delta}\left(\bm{y}\right)}\right)\Mvec_k + \chi_{O_{\delta}\left(\bm{y}\right)}\left(\Mvec_k - \Mvec\right) + \left(\chi_{O_{\delta}\left(\bm{y}\right)} - \chi_{\bm{y}\left(\Omega\right)}\right)\Mvec.\label{eq:mcom8}
\end{eqnarray}
Therefore,
\begin{eqnarray}
& &||\chi_{\bm{y}_k\left(\Omega\right)} \Mvec_k - \chi_{\bm{y}\left(\Omega\right)}\Mvec ||_{L^2\left(\mathbb R^3; \mathbb R^3\right)}\nonumber\\
&\le& \underbrace{||\Mvec_k||_{L^2\left(\bm{y}_k\left(\Omega\right)\setminus O_{\delta}\left(\bm{y}\right)\right)}}_{\mathcal{I}_1} + \underbrace{||\Mvec_k - \Mvec||_{L^2\left( O_{\delta}\left(\bm{y}\right)\right)}}_{\mathcal{I}_2} + \underbrace{||\Mvec||_{L^2\left(\bm{y}\left(\Omega\right)\setminus O_{\delta}\left(\bm{y}\right)\right)}}_{\mathcal{I}_3}.
\end{eqnarray}
To estimate $\mathcal{I}_1$, we set $\Omega^{\delta}_{k} = \bm{y}^{-1}_k\left(O_{\delta}\left(\bm{y}\right)\right)$ and write
\begin{eqnarray}\label{eq:com1}
\mathcal{I}_1
&=&||\Mvec_k||^{2}_{L^2\left(\bm{y}_k\left(\Omega\right)\right)}\setminus O_{\delta}\left(\bm{y}\right)\}\nonumber\\
&=&\int_{\Omega}\left(1-\chi_{\Omega^{\delta}_{k}}\right)|\Mvec_k\left(\bm{y}_k\left(X\right)\right)|^2\det(\nabla \bm{y}_k)dX\nonumber\\
&\underset{\eqref{eq:sat1}}{=}&\int_{\Omega}\dfrac{1-\chi_{\Omega^{\delta}_{k}}}{\det(\nabla \bm{y}_k)}dX.
\end{eqnarray}
%$$ = \int_{\Omega} \dfrac{\left(1-\chi_{\Omega^{\epsilon}_{k}}\right)}{\det(\nabla y_k)}  dX $$
%$$= \int_{\Omega}|\Omega \setminus y^{-1}_k\left(O_{\epsilon}\left(y\right)\right)| $$The last implication in \eqref{eq:com1} follows using \eqref{eq:sat1}. In what follows, we show that $\bm{y}^{-1}_k\left(O_{\delta}\left(\bm{y}\right)\right)$ uniformly converges to $\bm{y}^{-1}\left(O_{\delta}\left(\bm{y}\right)\right)$ for any $\delta > 0$. We consider a compact set $\mathcal{C}$ such that $O_{\delta}\left(\bm{y}\right) \subset \mathcal{C} \subset \bm{y}\left(\Omega\right)$. It is sufficient to show that $\bm{y}^{-1}_k$ uniformly converges to $\bm{y}^{-1}$ on $\mathcal{C}$. Let us consider a point $X\in\Omega$ such that $\bm{y}_k\left(X\right)\rightarrow \bm{y}\left(X\right)$. We denote $\bm{y}_k\left(X\right)$ by $x_k$ and $\bm{y}\left(X\right)$ by $x$. Due to continuity of $\bm{y}^{-1}_k$, $x_k\rightarrow x$ in $\mathcal{C}$ implies that $\bm{y}^{-1}_k\left(x_k\right)\rightarrow \bm{y}^{-1}_k\left(x\right)$ and the following estimate holds
The last implication in \eqref{eq:com1} follows using \eqref{eq:sat1}. We now verify the convergence of the inverse images appearing in the first term. More precisely, we show that $\bm{y}^{-1}_k\left(O_{\delta}\left(\bm{y}\right)\right)$ converges uniformly to $\bm{y}^{-1}\left(O_{\delta}\left(\bm{y}\right)\right)$ for any $\delta > 0$. We consider a compact set $\mathcal{C}$ such that $O_{\delta}\left(\bm{y}\right) \subset \mathcal{C} \subset \Omega^{\bm{y}}$. It suffices to establish uniform convergence of the inverse mappings on the compact set $\mathcal{C}$. Let us consider a point $X\in\Omega$ such that $\bm{y}_k\left(X\right)\rightarrow \bm{y}\left(X\right)$. We denote $\bm{y}_k\left(X\right)$ by $x_k$ and $\bm{y}\left(X\right)$ by $x$. Due to continuity of $\bm{y}^{-1}_k$, $x_k\rightarrow x$ in $\mathcal{C}$ implies that $\bm{y}^{-1}_k\left(x_k\right)\rightarrow \bm{y}^{-1}_k\left(x\right)$ and the following estimate holds
\begin{eqnarray}
\sup_{x\in \mathcal{C}}|\bm{y}^{-1}_k\left(x\right) - \bm{y}^{-1}\left(x\right)|
&=&\sup_{x\in \mathcal{C}}|\bm{y}^{-1}_k\left(x\right) - \bm{y}^{-1}_k\left(x_k\right) + \bm{y}^{-1}_k\left(x_k\right) - \bm{y}^{-1}\left(x\right)|\nonumber\\
&\le&\sup_{x\in \mathcal{C}}|\bm{y}^{-1}_k\left(x\right) -\bm{y}^{-1}_k\left(x_k\right)| + \sup_{x\in \mathcal{C}}|\bm{y}^{-1}_k\left(x_k\right) - \bm{y}^{-1}\left(x\right)|\nonumber\\
&<&\epsilon\label{eq:mcom9}
\end{eqnarray}
for any $\epsilon>0$. Therefore, $\bm{y}^{-1}_k\left(O_{\delta}\left(\bm{y}\right)\right)$ uniformly converges to $\bm{y}^{-1}\left(O_{\delta}\left(\bm{y}\right)\right)$ for any $\delta > 0$. Again, from \eqref{eq:com1} we have the estimate
\begin{eqnarray}
\mathcal{I}_1&=&||\Mvec_k||^{2}_{L^2\left(\bm{y}_k\left(\Omega\right)\setminus O_{\delta}\left(\bm{y}\right)\right)}\nonumber\\
&=& \int_{\Omega} \dfrac{1-\chi_{\Omega^{\delta}_{k}}}{\det(\nabla \bm{y}_k)}dX\nonumber\\
&=& \int_{\Omega}\dfrac{1}{\det(\nabla \bm{y}_k)}\left(\chi_{\Omega}-\chi_{\Omega^{\delta}_{k}}\right)dX\nonumber\\
&\underset{\text{H\"older}}{\leq}&\sup_{k\in\mathbb N}\bigg\{\left\lVert\dfrac{1}{\det(\nabla \bm{y}_k)}\right\lVert_{L^1\left(\Omega\right)}\left\lVert\chi_{\Omega^{\delta}_{k}}-\chi_{\Omega}\right\lVert_{L^{\infty}\left(\Omega\right)}\bigg\}.\label{eq:mcom10}
\end{eqnarray}
Since $\bm{y}^{-1}_k\left(O_{\delta}\left(\bm{y}\right)\right)$ uniformly converges to $\bm{y}^{-1}\left(O_{\delta}\left(\bm{y}\right)\right)$ for any $\delta > 0$, we obtain
\begin{eqnarray}
\lim_{\delta\rightarrow 0}\lim_{k\rightarrow\infty}\left\lVert\chi_{\Omega^{\delta}_{k}}-\chi_{\Omega}\right\lVert_{L^{\infty}}= 0\label{eq:linfty}
\end{eqnarray}
%as $k\rightarrow\infty$ and $\delta\rightarrow 0$.\newline\\
Using $\sup_{k\in\mathbb N}\left\lVert\dfrac{1}{\det(\nabla \bm{y}_k)}\right\lVert_{L^1\left(\Omega\right)}<\infty$ by \eqref{eq:coer} together with \eqref{eq:linfty}, it follows from \eqref{eq:mcom10} that
\begin{eqnarray}
\lim_{\delta\rightarrow 0}\lim_{k\rightarrow\infty}\mathcal{I}_1=\lim_{\delta\rightarrow 0}\lim_{k\rightarrow\infty}||\Mvec_k||^{2}_{L^2\left(\bm{y}_k\left(\Omega\right)\setminus O_{\delta}\left(\bm{y}\right)\right)}= 0. \label{eq:mcom11}
\end{eqnarray}
%as $k\rightarrow\infty$ and $\delta\rightarrow 0$.\newline\\
By \eqref{eq:mcom5'}, we have
\begin{eqnarray}
\lim_{k\rightarrow\infty}\mathcal{I}_2=  \lim_{k\rightarrow\infty}||\Mvec_k - \Mvec||_{L^2\left( O_{\delta}\left(\bm{y}\right)\right)}= 0.\label{eq:mcom12}
\end{eqnarray}
%as $k\rightarrow\infty$.\newline\\
Because $\lim_{\delta\rightarrow 0}\mu\left(\bm{y}\left(\Omega\right)\setminus O_{\delta}\left(\bm{y}\right)\right)= 0$, and the integral $\int_{\bm{y}\left(\Omega\right)\setminus O_{\delta}\left(\bm{y}\right)}|\Mvec|^2dx$ is absolutely continuous with respect to the measure $\mu_x$,
\begin{eqnarray}
\lim_{\delta\rightarrow 0}\mathcal{I}_3=\lim_{\delta\rightarrow 0}\int_{\bm{y}\left(\Omega\right)\setminus O_{\delta}\left(\bm{y}\right)}|\Mvec|^2dx= 0.\label{eq:mcom13}
\end{eqnarray}
Combining \eqref{eq:mcom11}, \eqref{eq:mcom12} and \eqref{eq:mcom13}, we finally obtain \eqref{eq:mcom6}.\newline
\\
\noindent The proof of \eqref{eq:mcom7} follows the same strategy. For any arbitrary $\bm{\psi}\in L^2\left( \mathbb R^3; \mathbb R^{3\times 3}\right)$, we can estimate
\begin{eqnarray}
& &\int_{\mathbb R^3} \left( \chi_{\bm{y}_k\left(\Omega\right)} \nabla \Mvec_k - \chi_{\bm{y}\left(\Omega\right)}\nabla\Mvec \right)\cdot\bm{\psi} dx\nonumber\\
&=&\int_{\mathbb R^3} \Big[\left( \chi_{\bm{y}_k\left(\Omega\right)} - \chi_{O_{\delta}{\left(\bm{y}\right)}} \right) \nabla \Mvec_k\cdot\bm{\psi} + \chi_{O_{\delta}{\left(\bm{y}\right)}}\left(\nabla \Mvec_k - \nabla \Mvec\right)\cdot\bm{\psi}\nonumber\\
& &~~~~~~~+ \left(\chi_{O_{\delta}{\left(\bm{y}\right)}} - \chi_{\bm{y}\left(\Omega\right)}\right)\nabla\Mvec\cdot\bm{\psi}\Big]dx\nonumber\\ 
&=& \mathcal{I}_4+\mathcal{I}_5+\mathcal{I}_6.\label{eq:mcom14}
\end{eqnarray}
Using H\"older's inequality, we have
\begin{eqnarray}
\mathcal{I}_4&=&\int_{\mathbb R^3} \left(\chi_{\bm{y}_k\left(\Omega\right)} - \chi_{O_{\delta}{\left(\bm{y}\right)}} \right) \nabla \Mvec_k\cdot\bm{\psi} dx\nonumber\\
&\le&\sup_{k\in\mathbb N} ||\nabla \Mvec_k||_{L^2\left(\left(\bm{y}_k\left(\Omega\right)\right)\setminus O_{\delta}{\left(\bm{y}\right)}; \mathbb R^{3\times 3}\right)}||\bm{\psi}||_{L^2\left(\bm{y}_k\left(\Omega\right)\setminus O_{\delta}{\left(\bm{y}\right)}; \mathbb R^{3\times 3}\right)}.\label{eq:mcom15}
%&\le& C\sup_{k\in\mathbb N}||\bm{\psi}||_{L^2\left(\Omega^{\bm{y}_k}\setminus O_{\delta}{\left(\bm{y}\right)}\right)}
\end{eqnarray}
Since $\lim_{\delta\rightarrow 0}\lim_{k\rightarrow\infty}\mu\left(\bm{y}_k\left(\Omega\right)\setminus O_{\delta}\left(\bm{y}\right)\right)=0$, and the integral $\int_{\bm{y}_k\left(\Omega\right)\setminus O_{\delta}\left(\bm{y}\right)}|\bm{\psi}|^2dx$ is absolutely continuous with respect to the measure $\mu_x$, we have
\begin{eqnarray}
\lim_{\delta\rightarrow 0}\lim_{k\rightarrow\infty}||\bm{\psi}||_{L^2\left(\bm{y}_k\left(\Omega\right)\setminus O_{\delta}{\left(\bm{y}\right)}; \mathbb R^{3\times 3}\right)}\rightarrow 0.\label{eq:mcom16}
\end{eqnarray}
%as $k\rightarrow\infty$ \mbox{ and } $\delta\rightarrow 0$.\newline\\
Using $\sup_{k\in\mathbb N} ||\nabla \Mvec_k||_{L^2\left(\bm{y}_k\left(\Omega\right)\setminus O_{\delta}{\left(\bm{y}\right)}; \mathbb R^{3\times 3}\right)}<\infty$ by \eqref{eq:coer} together with \eqref{eq:mcom16}, we conclude from \eqref{eq:mcom15} that
\begin{eqnarray}
\lim_{\delta\rightarrow 0}\lim_{k\rightarrow\infty}\mathcal{I}_4=\lim_{\delta\rightarrow 0}\lim_{k\rightarrow\infty}\int_{\mathbb R^3} \left( \chi_{\bm{y}_k\left(\Omega\right)} - \chi_{O_{\delta}{\left(\bm{y}\right)}} \right) \nabla \Mvec_k\cdot\bm{\psi} dx=0.
\end{eqnarray}
By \eqref{eq:mcom5'}, we immediately have
\begin{eqnarray}
\lim_{k\rightarrow\infty}\mathcal{I}_5=\lim_{k\rightarrow\infty}\int_{\mathbb R^3}\chi_{O_{\delta}\left(\bm{y}\right)}\left(\nabla \Mvec_k - \nabla \Mvec\right)\cdot\bm{\psi}dx = 0\label{eq:mcom17}
\end{eqnarray}
for any $\bm{\psi}\in L^2\left( \mathbb R^3; \mathbb R^{3\times 3}\right)$ and $\delta>0$.\newline\\
Repeating the argument used in the case of $\mathcal{I}_4$, we obtain
\begin{eqnarray}
\lim_{\delta\rightarrow 0}\mathcal{I}_6=\lim_{\delta\rightarrow 0}\int_{\mathbb R^3}\left(\chi_{O_{\delta}{\left(\bm{y}\right)}} - \chi_{\bm{y}\left(\Omega\right)}\right)\nabla\Mvec\cdot\bm{\psi}dx=0\label{eq:i6}
\end{eqnarray}
for any $\bm{\psi}\in L^2\left( \mathbb R^3; \mathbb R^{3\times 3}\right)$.
Inserting \eqref{eq:mcom15}, \eqref{eq:mcom17} and \eqref{eq:i6} into \eqref{eq:mcom14}, the assertion \eqref{eq:mcom7} follows.\newline\\
%as $\delta\rightarrow 0$.\newline\\
\textbf{Step III.}\newline
To conclude the compactness proof, it remains to verify that the saturation constraint is preserved in the limit.
\begin{eqnarray}
|\Mvec\left(\bm{y}\left(X\right)\right)|\det(\nabla \bm{y}\left(X\right)) = 1 \mbox{ a.e. in } \Omega.\label{eq:mcom18}
\end{eqnarray}
Fix a point $X\in\Omega$ and consider consider a family of nested balls $B\left(X, r_i\right)$ of radii $r_i$ such that
\begin{eqnarray}
\Omega = \bigcup_{i=1}^{\infty}B\left(X, r_i\right)=\bigcup_{i=1}^{\infty}\mathcal{B}_i.\label{eq:mcom19}
\end{eqnarray}
The Lindelöf property (see e.g.\ \cite{Fonseca-Leoni}) implies that \eqref{eq:mcom19} holds. Each open ball $B\left(X, r_i\right)$ is denoted by $\mathcal{B}_i$ in \eqref{eq:mcom19}. Due to the energy bound,
\begin{eqnarray}
\sup_{k\in\mathbb N}||\Mvec_k||_{W^{1, 2}\left(\bm{y}_k\left(\mathcal{B}_i\right);~\mathbb R^3\right)}<\infty\label{eq:mcom20}
\end{eqnarray}
for each $i\in\mathbb N$. Thus, up to a subsequence (without re-indexing)
\begin{eqnarray}
\chi_{\bm{y}_k\left(\mathcal{B}_i\right)} \Mvec_k \rightarrow \chi_{\bm{y}\left(\mathcal{B}_i\right)} \Mvec \mbox{ strongly in } L^2\left(\mathbb R^3; \mathbb R^3\right)\label{eq:mcom21} 
\end{eqnarray}
for each $i\in\mathbb N$. Since $\mu\left(\mathcal{B}_i\right)<\infty$, we have
\begin{eqnarray}
\chi_{\bm{y}_k\left(\mathcal{B}_i\right)} \Mvec_k \rightarrow \chi_{\bm{y}\left(\mathcal{B}_i\right)} \Mvec \mbox{ strongly in } L^1\left(\mathbb R^3; \mathbb R^3\right)\label{eq:mcom22}
\end{eqnarray}
for each $i\in\mathbb N$. Therefore,
\begin{eqnarray}
\lim_{k\rightarrow\infty}\int_{\mathcal{B}_i} |\Mvec_k\left(\bm{y}_k\left(X\right)\right)|\det(\nabla \bm{y}_k\left(X\right))dX = \int_{\mathcal{B}_i} |\Mvec\left(\bm{y}\left(X\right)\right)|\det(\nabla \bm{y}\left(X\right))dX.\label{eq:mcom23}
\end{eqnarray}
Applying \eqref{eq:sat1} in \eqref{eq:mcom23}, it holds that 
\begin{eqnarray}
\int_{\mathcal{B}_i} |\Mvec\left(\bm{y}\left(X\right)\right)|\det(\nabla \bm{y}\left(X\right))dX = \mu\left(\mathcal{B}_i\right).\label{eq:mcom24}
\end{eqnarray}
Passing to the limit as $r_i\rightarrow 0$ in \eqref{eq:mcom24} and applying the Lebesgue-Besicovitch differentiation Theorem (cf. \cite{Evans-Gariepy}), it immediately follows that
\begin{eqnarray}
|\Mvec\left(\bm{y}\left(X\right)\right)|\det(\nabla \bm{y}\left(X\right)) = 1 \mbox{ a.e. in } \Omega.\label{eq:mcom25}
\end{eqnarray}
\end{proof}
%\begin{remark}\label{re3} $|E^{\bm{y}}|$ is the measure theoretic image of $E$ under $\bm{y}$. However, in this case, the measure-theoretic images are reduced to the usual images of $E$ through the continuous representative of $\bm{y}$.
%\end{remark}
%\medskip
%\begin{remark}\label{re4} The second equality is not true in general, here it holds because our deformation functions are homeomorphic due to Lemma \ref{lem3}. This is important to note that the application of Lemma \ref{lem3} is a crucial step in the above proof, whereas Lemma \ref{3} is itself dependent on Theorem \ref{thm2}. This way the argument of compactness for the set $\mathscr{Y}$ is extended to the sharpness condition $\left(q\ge n-1\right)$.
%\end{remark}
%\medskip
%\begin{remark}\label{re5} In general, if a sequence of Sobolev homeomorphism $\bm{f}_k : \Omega \rightarrow \mathbb R^3$ with positive Jacobian determinants, $J\left(x, \bm{f}_k\right) > 0$, converges weakly to $\bm{f}$ in $W^{1,p}\left(\Omega; \mathbb R^3\right)$, then $J\left(x, \bm{f}\right) \ge 0$ a.e. in $\Omega$. But in the case of a homeomorphic function with finite distortion, we have $J\left(x, \bm{f}\right) > 0$ a.e. in $\Omega$.
%\end{remark}
%\medskip
\medskip
In the following, we prove the strong convergence for the magnetization sequence in a composition with deformation, and this convergence will allow us to establish the weak semicontinuity in the magnetostrictive function $\mathcal{W}$.
\begin{theorem}\label{saturation}Let $\left(\bm{y}_k, \Mvec_k\right)\rightarrow\left(\bm{y}, \Mvec\right)$ in $\mathscr{A}$. Then the following convergences hold:\newline
\mbox{ up to a subsequence (without re-indexing)}
\begin{eqnarray}
\Mvec_k\circ \bm{y}_k\rightarrow \Mvec\circ \bm{y} \mbox{ strongly in } L^1\left(\Omega; \mathbb R^3\right),\label{eq:mcom26}
\end{eqnarray}
\mbox{ up to a subsequence (without re-indexing)}
\begin{eqnarray}
\Mvec_k\circ \bm{y}_k\rightarrow \Mvec\circ \bm{y} \mbox{ a.e. in } \Omega.\label{eq:mcom26'}
\end{eqnarray}
\end{theorem}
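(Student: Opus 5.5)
The plan is to reduce \eqref{eq:mcom26} to a change of variables combined with the homeomorphism property from Lemma \ref{lem3}. The decisive convergences are the uniform convergence $\bm{y}_k\to\bm{y}$ in $C(\overline\Omega;\mathbb R^3)$ from \eqref{eq:mcom1} and the strong $L^2$ convergence $\chi_{\bm{y}_k(\Omega)}\Mvec_k\to\chi_{\bm{y}(\Omega)}\Mvec$ from \eqref{eq:mcom6}. The saturation constraint \eqref{eq:sat1} supplies the weight: $|\Mvec_k\circ\bm{y}_k| = 1/\det\nabla\bm{y}_k$, and this is bounded in $L^s(\Omega)$ with $s>1$ by \eqref{eq:coer}. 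Hence $\{\Mvec_k\circ\bm{y}_k\}$ is bounded in $L^s$ and therefore equi-integrable on the bounded set $\Omega$. By Vitali's theorem, \eqref{eq:mcom26} then reduces to proving convergence in measure, and \eqref{eq:mcom26'} follows by extracting an a.e.\ convergent subsequence.

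For the convergence in measure, I will approximate $\Mvec$ by a function $\bm{\phi}\in C_c(\mathbb R^3;\mathbb R^3)$ with $\|\chi_{\bm{y}(\Omega)}\Mvec-\bm{\phi}\|_{L^2(\mathbb R^3)}<\epsilon$. I then split
\[
\Mvec_k\circ\bm{y}_k-\Mvec\circ\bm{y} = (\Mvec_k-\bm{\phi})\circ\bm{y}_k + (\bm{\phi}\circ\bm{y}_k-\bm{\phi}\circ\bm{y}) + (\bm{\phi}-\Mvec)\circ\bm{y}.
\]
The middle term tends to $0$ uniformly on $\Omega$, because $\bm{\phi}$ is uniformly continuous and \eqref{eq:mcom1} holds. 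The remaining terms are treated with weighted estimates. For a measurable $\bm{g}$ on $\mathbb R^3$, Hölder's inequality, the area formula and injectivity (Lemma \ref{lem3}) give
\[
\int_\Omega |\bm{g}\circ\bm{y}_k|^{\frac{2s}{s+1}}dX \le \Big(\int_\Omega |\bm{g}\circ\bm{y}_k|^2\det\nabla\bm{y}_k\,dX\Big)^{\frac{s}{s+1}}\Big(\int_\Omega (\det\nabla\bm{y}_k)^{-s}dX\Big)^{\frac{1}{s+1}}
\le C\|\bm{g}\|_{L^2(\bm{y}_k(\Omega))}^{\frac{2s}{s+1}}.
\]
Applying this with $\bm{g}=\Mvec_k-\bm{\phi}$ yields
\[
\|\Mvec_k-\bm{\phi}\|_{L^2(\bm{y}_k(\Omega))}\le \|\chi_{\bm{y}_k(\Omega)}\Mvec_k-\chi_{\bm{y}(\Omega)}\Mvec\|_{L^2}+\epsilon+\|\bm{\phi}\|_{L^2(\bm{y}_k(\Omega)\triangle\bm{y}(\Omega))}.
\]
The first summand vanishes as $k\to\infty$ by \eqref{eq:mcom6}. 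The third vanishes because $\bm{\phi}$ is bounded and $\mu(\bm{y}_k(\Omega)\triangle\bm{y}(\Omega))\to0$. The same estimate with $\bm{y}$ in place of $\bm{y}_k$ handles the third term of the split. Letting $k\to\infty$ and then $\epsilon\to0$ gives convergence in $L^{2s/(s+1)}$, hence in measure. Combined with the $L^s$ bound, this yields \eqref{eq:mcom26}.

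\textbf{Main obstacle.} The step I expect to be hardest is $\mu(\bm{y}_k(\Omega)\triangle\bm{y}(\Omega))\to0$. I would prove it in two parts. Every compact $\mathcal{C}\subset\bm{y}(\Omega)$ lies in $\bm{y}_k(\Omega)$ for large $k$; this is the degree argument already used for the sets $O_\delta(\bm{y})$. In the other direction, $\bm{y}_k(\Omega)$ lies in a $\|\bm{y}_k-\bm{y}\|_\infty$-neighbourhood of $\bm{y}(\overline\Omega)$. Both parts use the common boundary trace $\overline{\bm{y}}$. It remains to show that $\mu(\bm{y}(\partial\Omega))=0$. I would try to obtain this from the Lusin (N) property of $\bm{y}$: extend $\bm{y}$ by the fixed boundary datum to a Sobolev map with $p>3$, and use the fact that such maps satisfy condition (N).
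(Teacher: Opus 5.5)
Your proposal is correct, and it takes a genuinely different route from the paper.

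\textbf{The paper's route.} The paper factors $\Mvec_k\circ\bm y_k=(\Mvec_k\circ\bm y_k\det\nabla\bm y_k)\cdot(\det\nabla\bm y_k)^{-1}$ into a unit vector field and its modulus, and proves strong convergence of each factor separately. For the unit field it uses Banach--Alaoglu plus the (trivial) convergence of norms. For the modulus it reads off $\|1/\det\nabla\bm y_k\|_{L^1}\to\|1/\det\nabla\bm y\|_{L^1}$ from \eqref{eq:mcom6} by change of variables, combines this with the weak convergence \eqref{eq:compact10}, and then applies Vitali in $L^{p/3}$ and H\"older.

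\textbf{Your route.} You never separate modulus and direction. Instead you transport the Eulerian convergence \eqref{eq:mcom6} to $\Omega$ through the weighted change-of-variables estimate, and treat the composition by $C_c$-approximation together with the uniform convergence \eqref{eq:mcom1}.

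\textbf{What each buys.} Your route is more self-contained. It avoids three steps the paper's route needs:
\begin{enumerate}
\item identifying the Banach--Alaoglu limit of the unit fields;
\item upgrading weak convergence of $\det\nabla\bm y_k$ plus norm convergence of its reciprocal to strong convergence, which needs a strict-convexity argument;
\item equi-integrability of $(\det\nabla\bm y_k)^{-p/3}$, which \eqref{eq:coer} only gives when $s>p/3$.
\end{enumerate}
If you use Cauchy--Schwarz instead of your $s$-H\"older step, you only need $\sup_k\|1/\det\nabla\bm y_k\|_{L^1}=\sup_k\|\chi_{\bm y_k(\Omega)}\Mvec_k\|_{L^2}^2$. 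This is already bounded by the assumed convergence in $\mathscr A$, so no energy bound is needed, and $L^1$ convergence follows directly without Vitali. The paper's route, if completed, would additionally yield strong convergence of $1/\det\nabla\bm y_k$.

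\textbf{Two small remarks.} First, for the third term of your split you need $\int_\Omega(\det\nabla\bm y)^{-s}\,dX<\infty$ for the limit map. This follows from weak lower semicontinuity of the convex integrand $t\mapsto t^{-s}$, or, for $s=1$, directly from the saturation constraint of the limit.

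Second, your ``main obstacle'' is lighter than you expect. Maps that are injective on $\Omega$, continuous on $\overline\Omega$ and equal to $\overline{\bm y}$ on $\partial\Omega$ satisfy $\bm y_k(\Omega)\cap\overline{\bm y}(\partial\Omega)=\emptyset$ and $\bm y_k(\Omega)=\{x\notin\overline{\bm y}(\partial\Omega):\deg(\overline{\bm y},\Omega,x)=1\}$. Hence $\bm y_k(\Omega)=\bm y(\Omega)$ for every $k$, and the symmetric difference is empty.
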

\begin{proof}
The saturation constraint \eqref{eq:sat1}, together with its limiting counterpart \eqref{eq:mcom25},implies 
\begin{eqnarray}
||\Mvec_k\circ \bm{y}_k \det(\nabla \bm{y}_k)||^p_{L^p\left(\Omega;~\mathbb R^3\right)} \rightarrow ||\Mvec\circ \bm{y} \det(\nabla \bm{y})||^p_{L^p\left(\Omega;~\mathbb R^3\right)}\label{eq:mcom27}
\end{eqnarray}
for $1\leq p<\infty$.
By another application of the Banach-Alaoglu theorem, we obtain 
\begin{eqnarray}
\Mvec_k\circ \bm{y}_k \det(\nabla \bm{y}_k) \rightharpoonup \Mvec\circ \bm{y} \det(\nabla \bm{y}) \mbox{ weakly in } L^{p}\left(\Omega; \mathbb R^3\right)\label{eq:mcom28}
\end{eqnarray}
for $1 <  p < \infty$. Since weak convergence is accompanied by convergence of the corresponding norms, we deduce that
\begin{eqnarray}
\Mvec_k\circ \bm{y}_k \det(\nabla \bm{y}_k) \longrightarrow \Mvec\circ \bm{y} \det(\nabla \bm{y}) \mbox{ strongly in } L^p\left(\Omega; \mathbb R^3\right)\label{eq:mcom29}
\end{eqnarray}
for $1 \leq p < \infty$. Passing to a subsequence if necessary,
\begin{eqnarray}
\Mvec_k\circ \bm{y}_k \det(\nabla \bm{y}_k) \longrightarrow \Mvec\circ \bm{y} \det(\nabla \bm{y}) \mbox{ a.e. in } \Omega.\label{eq:com-ptwise}
\end{eqnarray}
Combining the strong convergence property \eqref{eq:mcom6} with the saturation condition  \eqref{eq:sat1}, we obtain
\begin{eqnarray}
\left\lVert\frac{1}{\det(\nabla \bm{y}_k)}\right\rVert_{L^1\left(\Omega\right)}\rightarrow \left\lVert\frac{1}{\det(\nabla \bm{y})}\right\rVert_{L^1\left(\Omega\right)}.\label{eq:norm-cov}
\end{eqnarray}
The weak convergence of the Jacobian determinant established in \eqref{eq:compact10} ogether with convergence of the norms in \eqref{eq:norm-cov}, implies
\begin{eqnarray}
\dfrac{1}{\det(\nabla \bm{y}_k)} \rightarrow \dfrac{1}{\det(\nabla \bm{y})}  \mbox{ strongly in } L^{1}\left(\Omega\right).\label{eq:L1-conv}
\end{eqnarray}
After extraction of a subsequence, we obtain
\begin{eqnarray}
\dfrac{1}{\det(\nabla \bm{y}_k)} \rightarrow \dfrac{1}{\det(\nabla \bm{y})} \mbox{ a.e. in } \Omega.\label{eq:ptwise}
\end{eqnarray}
The uniform integrability provided by the energy estimate \eqref{eq:en_bound},together with the almost everywhere convergence in \eqref{eq:ptwise}, allows us to apply Vitali's convergence theorem (see e.g.\ \cite[Theorem 2.24]{Fonseca-Leoni}) and conclude that
\begin{eqnarray}
\dfrac{1}{\det(\nabla \bm{y}_k)} \rightarrow \dfrac{1}{\det(\nabla \bm{y})} \mbox{ strongly in } L^{p/3}\label{eq:con_Lp3}
\end{eqnarray}
for $p>3$. We now estimate the difference $\Mvec_k\circ \bm{y}_k - \Mvec\circ \bm{y}$. Using H\"older's inequality with exponent $s=\frac{p}{p-3}$, $p>3$, we obtain
\begin{eqnarray}
& &||\Mvec_k\circ \bm{y}_k - \Mvec\circ \bm{y}||_{L^1}\nonumber\\
&\le&\left\lVert\Mvec_k\circ \bm{y}_k \det(\nabla \bm{y}_k)\dfrac{1}{\det(\nabla \bm{y}_k)}-\Mvec_k\circ \bm{y}_k \det(\nabla \bm{y}_k)\dfrac{1}{\det(\nabla \bm{y})}\right\rVert_{L^1}\nonumber\\
& &~~~~+\left\lVert\Mvec_k\circ \bm{y}_k \det(\nabla \bm{y}_k)\dfrac{1}{\det(\nabla \bm{y})}-\Mvec\circ \bm{y} \det(\nabla \bm{y})\dfrac{1}{\det(\nabla \bm{y})}\right\rVert_{L^1}\nonumber\\
&\underset{\text{H\"older}}{\leq}&\left\lVert\Mvec_k\circ \bm{y}_k \det(\nabla \bm{y}_k)\right\rVert_{L^s}\left\lVert\dfrac{1}{\det(\nabla \bm{y}_k)}-\dfrac{1}{\det(\nabla \bm{y})}\right\rVert_{L^{p/3}}\nonumber\\
& &~~~+\left\lVert\Mvec_k\circ \bm{y}_k \det(\nabla \bm{y}_k)-\Mvec\circ \bm{y} \det(\nabla \bm{y})\right\rVert_{L^s}\left\lVert\dfrac{1}{\det(\nabla \bm{y})}\right\rVert_{L^{p/3}}.\label{eq:com-est}
\end{eqnarray}
The two terms on the right-hand side of \eqref{eq:com-est} vanish as $k\to\infty$ in virtue of \eqref{eq:con_Lp3} and \eqref{eq:mcom29}. Consequently,
\begin{eqnarray}
\Mvec_k\circ \bm{y}_k\rightarrow\Mvec\circ \bm{y} \mbox{ strongly in } L^1.
\end{eqnarray}
In particular, after passing to a subsequence,
\begin{eqnarray}
\Mvec_k\circ \bm{y}_k\rightarrow\Mvec\circ \bm{y} \mbox{ a.e. in } \Omega.
\end{eqnarray}
\end{proof}
We now discuss the lower semicontinuity aspects of the magnetoelastic energy functional, cf.\ \eqref{eq:en_mag}.
\subsection{Lower semicontinuity}\label{lower_semi}
We next turn to the lower-semicontinuity analysis of the magnetoelastic energy. The main additional difficulty arises from the magnetostatic contribution, since the stray field is determined by the stationary Maxwell equations and therefore depends nonlocally on the magnetization. To pass to the limit in this term, we first establish a convergence result for the associated magnetic field. The following lemma is based on the ideas developed in \cite{DeSimone}; 
see also the concise reformulation given in 
\cite[Lemma 2.3]{MK}.
\begin{lemma}\label{lem-str}
Let $\chi_{\bm{y}_k\left(\Omega\right)}\Mvec_k \rightarrow \chi_{\bm{y}\left(\Omega\right)}\Mvec \mbox{ strongly in } L^{2}\left( \mathbb R^3; \mathbb R^3 \right)$ and $\Hvec_{\Mvec_k}\in L^2\left(\mathbb R^3; \mathbb R^3\right)$ such that
\end{lemma}
%\begin{gather}
\begin{eqnarray}
\textbf{div}\left(\chi_{\bm{y}\left(\Omega\right)}\Mvec + \mu_0\Hvec_{\Mvec}\right)&=0 \mbox{ in } \mathbb R^3,\nonumber\\
\textbf{curl}~\Hvec_{\Mvec}&=0 \mbox{ in } \mathbb R^3,\label{eq:maxwell'}
\end{eqnarray}
%\end{gather}
for each $k\in\mathbb N$. Then $\Hvec_{\Mvec_k} \rightharpoonup \Hvec_{\Mvec}$ weakly in $L^{2}\left( \mathbb R^3; \mathbb R^3 \right)$, where $\Hvec_{\Mvec}$ is a solution of \eqref{eq:maxwell'}.
\begin{proof}
%By virtue of the energy functional $\mathscr{E}_{mag}\left(\bm{y}, \Mvec\right)$, it is inevitable that $\Hvec_{\Mvec}$ is curl-free in the sense of distribution.
In view of the Maxwell system \eqref{eq:maxwell'}, the stray field $\Hvec_{\Mvec}$ naturally belongs to the space 
\begin{eqnarray}
\mathscr{H} = \{\Hvec\in L^2\left(\mathbb R^3; \mathbb R^3\right): \textbf{curl}\Hvec = 0 \mbox{ in the sense of distribution} \}\label{eq:mcom30}
\end{eqnarray}
and $\Hvec_{\Mvec}$ satisfying
\begin{eqnarray}
\mu_0\int_{\mathbb R^3}\Hvec_{\Mvec}\cdot\Hvec dx = -\int_{\mathbb R^3}\chi_{\bm{y}\left(\Omega\right)}\Mvec\cdot\Hvec dx ~~\forall~\Hvec\in \mathscr{H}.\label{eq:mcom31}
\end{eqnarray}
Since $\Hvec$ belongs to $L^2\left(\mathbb R^3; \mathbb R^3\right)$, the Poincar\'e Lemma (see e.g.\ \cite[Theorem 8.3]{csato2011pullback}) implies that 
\begin{eqnarray}
\textbf{curl}\Hvec = 0 \mbox{ in the sense of distribution }
\iff \Hvec = -\nabla \bm{\Phi} \mbox{ for some } \bm{\Phi}\in H^1\left(\mathbb R^3\right).\nonumber\\
\label{eq:mcom32}
\end{eqnarray}
Substituting the representation \eqref{eq:mcom32} into \eqref{eq:mcom31}, we obtain
\begin{eqnarray}
\mu_0\int_{\mathbb R^3}\nabla \bm{\Phi}_{\Mvec}\cdot\nabla\bm{\Phi} dx = \int_{\mathbb R^3}\chi_{\bm{y}\left(\Omega\right)}\Mvec\cdot \nabla\bm{\Phi}dx ~~\forall~\bm{\Phi}\in H^1\left(\mathbb R^3\right),
\end{eqnarray}
where $\Hvec_{\Mvec} = -\nabla \bm{\Phi}_{\Mvec}$ for some $\bm{\Phi}_{\Mvec}\in H^1\left(\mathbb R^3\right)$.\\
\\
Consider the bilinear form $B[\cdot~, \cdot]: \mathbb R^3\times\mathbb R^3\rightarrow\mathbb R$  defined by
\begin{eqnarray}
B[\bm{\Phi}_{\Mvec}, \bm{\Phi}] = \mu_0\int_{\mathbb R^3}\nabla \bm{\Phi}_{\Mvec}\cdot\nabla\bm{\Phi}dx = \int_{\mathbb R^3}\chi_{\bm{y}\left(\Omega\right)}\Mvec\cdot \nabla\bm{\Phi}dx~~\forall~\bm{\Phi}, \bm{\Phi}_{\Mvec}\in H^1\left(\mathbb R^3\right)
\end{eqnarray}
such that
\begin{eqnarray}
|B[\bm{\Phi}_{\Mvec}, \bm{\Phi}]|\le \mu_0||\bm{\Phi}_{\Mvec}||_{H^1}||\bm{\Phi}||_{H^1}~~\forall~\bm{\Phi}_{\Mvec}, \bm{\Phi}\in H^1\left(\mathbb R^3\right)
\end{eqnarray}
and
\begin{eqnarray}
B[\bm{\Phi}_{\Mvec}, \bm{\Phi}_{\Mvec}]\ge \mu_0||\bm{\Phi}_{\Mvec}||^2.
\end{eqnarray}
The continuity and coercivity properties of the bilinear form allow us to apply the Lax-Milgram theorem  \cite[Section 6.2,Theorem 1]{evans2022partial}. Consequently, there exists a unique potential $\bm{\Phi}_{\Mvec}\in H^1\left(\mathbb R^3\right)$ satisfying
\begin{eqnarray}
B[\bm{\Phi}_{\Mvec}, \bm{\Phi}] = \mu_0\int_{\mathbb R^3}\nabla \bm{\Phi}_{\Mvec}\cdot\nabla\bm{\Phi} dx = \int_{\mathbb R^3}\chi_{\bm{y}\left(\Omega\right)}\Mvec\cdot \nabla\bm{\Phi}dx~~\forall~\bm{\Phi}\in H^1\left(\mathbb R^3\right).
\end{eqnarray}
It follows that the Maxwell system \eqref{eq:maxwell'} admits a unique weak solution $\Hvec_{\Mvec}\in L^2\left(\mathbb R^3; \mathbb R^3\right)$ for every magnetization $\Mvec\in L^2\left(\mathbb R^3; \mathbb R^3\right)$.\\
\\
The uniform energy estimate \eqref{eq:en_bound} yields 
\begin{eqnarray}
\sup_{k\in \mathbb N}\int_{\mathbb R^3}|\Hvec_{\Mvec_k}|^2dx<\infty.
\end{eqnarray}
Applying the Banach-Alaoglu theorem, we obtain a subsequence (without re-indexing) such that
\begin{eqnarray}
\Hvec_{\Mvec_k}\rightharpoonup \bm{\eta} \mbox{ weakly in } L^2\left(\mathbb R^3; \mathbb R^3\right).
\end{eqnarray}
Finally, uniqueness of solutions to \eqref{eq:maxwell'}, together with the linear dependence of the stray field on the magnetization, identifies the weak limit $\bm{\eta}$ with $\Hvec_{\Mvec}$. This completes the proof.
\end{proof}
We now conclude the lower semicontinuity argument showing the following theorem.
\begin{theorem}\label{lower-semi}
Suppose that the admissible set $\mathscr{A}$ is non-empty. Then,
\begin{eqnarray}
\mathscr{E}_{mag}\left(\bm{y}, \Mvec\right)\le\liminf_{k\rightarrow\infty}\mathscr{E}_{mag} \left(\bm{y}_k, \Mvec_k\right),
\mbox{ when } \left(\bm{y}_k, \Mvec_k\right)\rightarrow\left(\bm{y}, \Mvec\right) \mbox{ in } \mathscr{A}.
\end{eqnarray}
\end{theorem}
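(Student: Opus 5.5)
The plan is to split $\mathscr{E}_{mag}$ into its three contributions and prove lower semicontinuity of each separately along a sequence $(\bm{y}_k,\Mvec_k)\rightarrow(\bm{y},\Mvec)$ in $\mathscr{A}$. We may assume $\liminf_k \mathscr{E}_{mag}(\bm{y}_k,\Mvec_k)<\infty$. After passing to a subsequence realizing the liminf, the energies are then uniformly bounded, so the coercivity estimate \eqref{eq:en_bound} and Theorem \ref{saturation} become available. Superadditivity of $\liminf$ then reduces the claim to the three separate inequalities below.

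\textbf{Exchange term.} By the definition of convergence in $\mathscr{A}$, $\chi_{\bm{y}_k(\Omega)}\nabla\Mvec_k\rightharpoonup\chi_{\bm{y}(\Omega)}\nabla\Mvec$ weakly in $L^2(\mathbb R^3;\mathbb R^{3\times3})$. Weak lower semicontinuity of the $L^2$ norm therefore gives
\[
\alpha\int_{\bm{y}(\Omega)}|\nabla\Mvec|^2\,dx\le\liminf_{k\to\infty}\alpha\int_{\bm{y}_k(\Omega)}|\nabla\Mvec_k|^2\,dx .
\]

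\textbf{Magnetostatic term.} Lemma \ref{lem-str}, applied with the strong convergence $\chi_{\bm{y}_k(\Omega)}\Mvec_k\to\chi_{\bm{y}(\Omega)}\Mvec$ in $L^2$, gives $\Hvec_{\Mvec_k}\rightharpoonup\Hvec_{\Mvec}$ weakly in $L^2(\mathbb R^3;\mathbb R^3)$. Lower semicontinuity of the norm again yields the inequality for $\frac{\mu_0}{2}\int|\Hvec_{\Mvec}|^2$. In fact, testing \eqref{eq:mcom31} with $\Hvec_{\Mvec_k}$ itself shows $\mu_0\|\Hvec_{\Mvec_k}\|^2=-\int\chi_{\bm{y}_k(\Omega)}\Mvec_k\cdot\Hvec_{\Mvec_k}$. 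Strong-times-weak convergence then gives convergence of this term, but the inequality is all that is needed.

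\textbf{Magnetostrictive term.} I would use polyconvexity \eqref{eq:poly} and write the integrand as
\[
\overline{\mathcal W}\bigl(\nabla\bm{y}_k,\mathbf{Cof}\,\nabla\bm{y}_k,\det\nabla\bm{y}_k,\Mvec_k\circ\bm{y}_k\bigr).
\]
Since $p>3$, weak continuity of minors gives:
\begin{itemize}
\item $\nabla\bm{y}_k\rightharpoonup\nabla\bm{y}$ in $L^p$;
\item $\mathbf{Cof}\,\nabla\bm{y}_k\rightharpoonup\mathbf{Cof}\,\nabla\bm{y}$ in $L^{p/2}$;
\item $\det\nabla\bm{y}_k\rightharpoonup\det\nabla\bm{y}$ in $L^{p/3}$, as in \eqref{eq:compact10}.
\end{itemize}
Theorem \ref{saturation} gives $\Mvec_k\circ\bm{y}_k\to\Mvec\circ\bm{y}$ a.e. in $\Omega$, and the compactness theorem ensures $\det\nabla\bm{y}>0$ a.e. So the first three arguments converge weakly and the last one strongly (in measure). $\overline{\mathcal W}$ is continuous in all variables, convex in the first three, and $\mathcal W\ge\alpha_2$, so it is bounded below. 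A classical lower semicontinuity theorem for normal integrands of the form $f(X,u,v)$ with $f$ convex in $v$ applies here, namely Ioffe's theorem (cf. \cite[Theorem 7.5]{Fonseca-Leoni}). We extend $\overline{\mathcal W}$ by $+\infty$ for nonpositive determinant, which is lower semicontinuous by \eqref{eq:beh}. With $u_k=\Mvec_k\circ\bm{y}_k$ converging in measure and $v_k=(\nabla\bm{y}_k,\mathbf{Cof}\,\nabla\bm{y}_k,\det\nabla\bm{y}_k)$ converging weakly in $L^1$, the theorem gives
\[
\int_\Omega\mathcal W(\nabla\bm{y},\Mvec\circ\bm{y})\,dX\le\liminf_{k\to\infty}\int_\Omega\mathcal W(\nabla\bm{y}_k,\Mvec_k\circ\bm{y}_k)\,dX .
\]

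The main obstacle is this third step. Ioffe's theorem needs a lower semicontinuous extension of $\overline{\mathcal W}$ to $\det\le0$ and the joint structure of strong convergence in the $\bm\lambda$-slot with weak convergence in the convex slots. Checking that \eqref{eq:beh} makes the $+\infty$ extension lower semicontinuous, and that the a.e. convergence from Theorem \ref{saturation} holds along the same subsequence realizing the liminf, is where care is required. If Ioffe's theorem is not directly usable, the fallback is to argue by hand: use Egorov's theorem to get uniform convergence of $\Mvec_k\circ\bm{y}_k$ off small sets, then use Mazur's lemma together with uniform continuity of $\overline{\mathcal W}$ on compact sets to pass to the limit.
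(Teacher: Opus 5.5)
Your proposal is correct and follows essentially the same route as the paper: the same three-term split; Lemma~\ref{lem-str} plus weak lower semicontinuity for the stray field; weak lower semicontinuity of the $L^2$ norm for the exchange term (using the weak convergence on all of $\mathbb R^3$ directly is slightly cleaner than the paper's detour through $O_\delta(\bm y)$); and, for the magnetostrictive term, weak continuity of minors together with the a.e.\ convergence from Theorem~\ref{saturation}, fed into a lower semicontinuity theorem for integrands convex in the weakly converging slots (Ioffe's theorem for you, \cite[Theorem 5.4]{ball1981null} in the paper). One small repair is needed: $\mathcal W\ge\alpha_2$ only bounds $\overline{\mathcal W}$ from below on the set $\{(F,\mathbf{Cof}\,F,\det F)\}$, so before invoking Ioffe you should replace $\overline{\mathcal W}$ by $\max\{\overline{\mathcal W},\alpha_2\}$, which is still continuous, convex in the first three arguments, and coincides with $\overline{\mathcal W}$ on that set.
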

\begin{proof}
We verify lower semicontinuity separately for each contribution to the magnetoelastic energy and then combine the resulting estimates.\newline\\
\textbf{Step I.} Lower semicontinuity of the magnetostrictive energy:\newline\\
\noindent The weak convergence of the deformation implies the standard weak continuity properties of the minors. More precisely, \cite[Theorem 8.20]{dacorogna2007direct} yields
\begin{eqnarray}
\mbox{Cof}\nabla \bm{y}_k &\rightharpoonup& \mbox{Cof}\nabla \bm{y} \mbox{ weakly in } L^{p/2}\left(\Omega; \mathbb R^{3\times 3}\right),\\
\det(\nabla\bm{y}_k) &\rightharpoonup&\det(\nabla\bm{y}) \mbox{ weakly in } L^{p/3}\left(\Omega; \mathbb R\right).
\end{eqnarray} 
Furthermore, \Cref{saturation} provides a subsequence for which
\begin{eqnarray}
\Mvec_k \circ \bm{y}_k \rightarrow \Mvec\circ \bm{y} \mbox{ a.e. in } \Omega.
\end{eqnarray}
Using the polyconvexity property of the energy density introduced in \eqref{eq:poly}, we obtain
\begin{eqnarray}\label{semc1}
& &\int_{\Omega}\mathcal{W}\left(\nabla \bm{y}\left(X\right), \Mvec\left(\bm{y}\left(X\right)\right)\right)dX\nonumber\newline\\
&\underset{\eqref{eq:poly}}{=}&\int_{\Omega}\overline{\mathcal{W}}\left(\nabla\bm{y}, \mathbf{Cof}\left(\nabla\bm{y}\right), \det(\nabla\bm{y}), \Mvec\left(\bm{y}\left(X\right)\right)\right)dX\nonumber\newline\\
&\le&\liminf_{k\rightarrow\infty}\int_{\Omega}\overline{\mathcal{W}}\left(\nabla \bm{y}_k\left(X\right), \mbox{Cof}\nabla \bm{y}_k, \det(\nabla\bm{y}_k), \Mvec_k\left(\bm{y}_k\left(X\right)\right)\right)dX\nonumber\newline\\
&\underset{\eqref{eq:poly}}{=}&\liminf_{k\rightarrow\infty}\int_{\Omega}\mathcal{W}\left(\nabla \bm{y}_k\left(X\right), \Mvec_k\left(\bm{y}_k\left(X\right)\right)\right)dX.\label{eq:poly-low}
\end{eqnarray}
The second inequality follows from the lower-semicontinuity theorem for polyconvex integrands; see  \cite[Theorem 5.4]{ball1981null}.\newline\\
\textbf{Step II.} Lower semicontinuity of the exchange energy:\newline\\
Using absolute continuity of the Lebesgue integral, for every $\epsilon>0$, one can choose $\delta>0$ such that
\begin{eqnarray}
\int_{O_{\delta}\left(\bm{y}\right)}|\nabla \Mvec|^2dx\ge\int_{\bm{y}\left(\Omega\right)}|\nabla \Mvec|^2dx-\epsilon.\label{eq:wklw}
\end{eqnarray}
The weak convergence established in \eqref{eq:mcom7}, combined with the lower semicontinuity of the $L^2$-norm, gives 
\begin{eqnarray}
\liminf_{k\to\infty} \int_{\bm{y}_k\left(\Omega\right)}|\nabla \Mvec_k|^2dx&\ge&\liminf_{k\to\infty} \int_{O_{\delta}\left(\bm{y}\right)}|\nabla \Mvec_k|^2dx\nonumber\newline\\
&\ge&\int_{O_{\delta}\left(\bm{y}\right)}|\nabla \Mvec|^2dx\nonumber\newline\\
&\underset{\eqref{eq:wklw}}{\ge}&\int_{\bm{y}\left(\Omega\right)}|\nabla \Mvec|^2dx-\epsilon\label{semc2}
\end{eqnarray}
for any $\epsilon>0$.\newline\\
%\noindent Due to the lower semicontinuity property of the norm, it is concluded that, $\chi_{\Omega^{y_k}}\nabla \Mvec_k \rightharpoonup \chi_{\Omega^{\bm{y}}}\nabla\Mvec \mbox{ in } L^2\left(\mathbb R^3; \mathbb R^{3\times 3}\right)$ implies
%\begin{eqnarray}\label{semc2}
%\int_{\Omega^{\bm{y}}}|\nabla \Mvec|^2dx \le \liminf_{k\to\infty} \int_{\Omega^{\bm{y}_k}}|\nabla \Mvec_k|^2dx.
%\end{eqnarray}
\textbf{Step III.} Lower semicontinuity of the stray field energy:\newline\\
\noindent Applying Lemma \ref{lem-str}, together with weak lower semicontinuity of the $L^2$-norm, we obtain
\begin{eqnarray}\label{semc3}
\int_{\mathbb R^3}|\Hvec_{\Mvec}|^2dx\le\liminf_{k\rightarrow\infty} \int_{\mathbb R^3} |\Hvec_{\Mvec_k}|^2dx.
\end{eqnarray}
Combining the conclusions of Steps I-III, namely \eqref{semc1}, \eqref{semc2} and \eqref{semc3} establishes the desired lower-semicontinuity inequality and completes the proof.
\end{proof}
\subsection{Proof of the existence theorem}\label{Proof of existence theorem}
\textbf{Proof of \Cref{thm1}}
\begin{proof}
The result follows from the direct method of the calculus of variations; see, for example, \cite[Theorem 2.1]{Rindler}. Indeed, \Cref{compact} provides compactness of energy-bounded sequences in the admissible class, while \Cref{lower-semi} yields lower semicontinuity of the magnetoelastic energy functional with respect to the convergence introduced above. Consequently, every minimizing sequence admits an admissible limit that attains the minimum of the energy.

It follows by the direct method in the calculus of variations (e.g.\ \cite[Theorem 2.1]{Rindler}) using \Cref{compact} and \Cref{lower-semi}.
\end{proof}
%\begin{remark}\label{re7}
%It is important to notice that the definition of curl in the sense of distribution inevitably does not coincide with the curl as a weak sense. However, one can achieve a better regularity of the stray field $\Hvec_{\Mvec}$ in the sense that $\Hvec_{\Mvec}\in H^1_{loc}\left(\mathbb R^3; \mathbb R^3\right)$ by using the elliptic regularity theorem so that the distributional curl coincides with the curl in the weak sense.    
%\end{remark}
%\medskip
\begin{remark}\label{re8}
The model can be extended in a straightforward manner to include the interaction with a prescribed external magnetic field. The corresponding Zeeman contribution is given by
$$-\int_{\mathbb R^3}\chi_{\bm{y}\left(\Omega\right)}\Mvec\cdot\Hvec_{ext}dx,$$ 
where $\Hvec_{ext}$ denotes the applied magnetic field. Since this field is regarded as prescribed data, Young's inequality immediately provides a suitable lower bound for the additional energy term. The compactness argument remains unchanged because it relies on the convergence properties of the magnetization, and weak lower semicontinuity of the Zeeman contribution follows directly from Fatou's lemma. 
\end{remark}
\section{Acknowledgements}
S.D. is grateful to DAAD (German Academic Exchange Service): The work is part of the PhD thesis of S.D.\ and was made possible with the support of a scholarship from the German Academic Exchange Service (DAAD). Also, S.D. is grateful to Humboldt foundation for the support of her research through Humboldt Professorship award of Stefanie Petermichl. The authors thank Anna Doležalová for carefully reviewing some part of the work.
\bibliographystyle{alpha}  % Numbered, citation-order bibliography
\bibliography{sn-bibliography}      % Replace 'references' with your .bib file name

@article{Barchiesi,
  title={Local invertibility in Sobolev spaces with applications to nematic elastomers and magnetoelasticity},
  author={Barchiesi, Marco and Henao, Duvan and Mora-Corral, Carlos},
  journal={Archive for Rational Mechanics and Analysis},
  volume={224},
  pages={743--816},
  year={2017},
  publisher={Springer}
}

@article{dolevzalova2025global,
  title={Global Integrability of the Reciprocal of Jacobians for Homeomorphisms of Finite Distortion},
  author={Dole{\v{z}}alov{\'a}, Anna and Onninen, Jani and Zhu, Yizhe and Zhu, Zheng},
  journal={arXiv preprint arXiv:2512.15519},
  year={2025}
}

@article{dolevzalova2025inv,
  title={(INV) condition and regularity of the inverse},
  author={Dole{\v{z}}alov{\'a}, Anna and Hencl, Stanislav and Onninen, Jani},
  journal={Journal of Functional Analysis},
  pages={111215},
  year={2025},
  publisher={Elsevier}
}

@article{Bresciani22,
  title={Quasistatic evolution in magnetoelasticity under subcritical coercivity assumptions},
  author={Bresciani, Marco},
  journal={Calculus of Variations and Partial Differential Equations},
  volume={62},
  number={6},
  pages={181},
  year={2023},
  publisher={Springer}
}

@article{Bresciani21,
  title={Existence results in large-strain magnetoelasticity},
  author={Bresciani, Marco and Davoli, Elisa and Kru{\v{z}}{\'\i}k, Martin},
  journal={Annales de l'Institut Henri Poincar{\'e} C},
  volume={40},
  number={3},
  pages={557--592},
  year={2022}
}

@article{DeSimone,
  title={Energy minimizers for large ferromagnetic bodies},
  author={De Simone, Antonio},
  journal={Archive for Rational Mechanics and Analysis},
  volume={125},
  pages={99--143},
  year={1993},
  publisher={Springer-Verlag}
}

@article{Desimmone,
  title={Existence of Minimizers for a Variational Problem in Two-Dimensional Nonlinear Magnetoelasticity},
  author={DeSimone, Antonio and Dolzmann, Georg},
  journal={Archive for Rational Mechanics and Analysis},
  volume={144},
  pages={107--120},
  year={1998},
  publisher={Springer}
}

@incollection{Evans-Gariepy,
  author={Evans, Lawrence C. and Ronald, Gariepy F.},
  title={Measure theory and fine properties of functions},
  year={2018},
  publisher={Routledge}
}

@incollection{hubert2008magnetic,
  title={Magnetic domains: the analysis of magnetic microstructures},
  author={Hubert, Alex and Sch{\"a}fer, Rudolf},
  year={2008},
  publisher={Springer Science \& Business Media}
}

@incollection{Fonseca-Leoni,
  author={Fonseca, Irene and Leoni, Giovanni},
  title={Modern methods in the calculus of variations: $\mathrm{L}^p$ spaces},
  year={2007},
  publisher={Springer Science \& Business Media}
}

@article{yan2000weak,
  title={On the weak limit of mappings with finite distortion},
  author={Yan, Baisheng},
  journal={Proceedings of the American Mathematical Society},
  volume={128},
  number={11},
  pages={3335--3340},
  year={2000}
}

@book{csato2011pullback,
  title={The pullback equation for differential forms},
  author={Csat{\'o}, Gyula and Dacorogna, Bernard and Kneuss, Olivier},
  volume={83},
  year={2011},
  publisher={Springer Science \& Business Media}
}

@article{Gehring1999,
  title={The limit of mappings with finite distortion},
  author={Gehring, F.W. and Iwaniec, T.},
  journal={Annales Fennici Mathematici},
  volume={24},
  number={1},
  pages={253--264},
  year={1999}
}

@article{Grandi2019,
  title={A phase-field approach to Eulerian interfacial energies},
  author={Grandi, Diego and Kru{\v{z}}{\'\i}k, Martin and Mainini, Edoardo and Stefanelli, Ulisse},
  journal={Archive for Rational Mechanics and Analysis},
  volume={234},
  number={1},
  pages={351--373},
  year={2019},
  publisher={Springer}
}

@incollection{Hencl-Koskela,
  title={Lectures on mappings of finite distortion},
  author={Hencl, Stanislav and Koskela, Pekka},
  volume={2096},
  year={2014},
  publisher={Springer}
}

@article{Iwaniec_Sverak,
  title={On mappings with integrable dilatation},
  author={Iwaniec, Tadeusz and Sverak, Vladimir},
  journal={Proceedings of the American Mathematical Society},
  pages={181--188},
  year={1993},
  publisher={JSTOR}
}

@article{James,
  title={Theory of magnetostriction with applications to TbxDy1-xFe2},
  author={James, Richard D and Kinderlehrer, David},
  journal={Philosophical Magazine B},
  volume={68},
  number={2},
  pages={237--274},
  year={1993},
  publisher={Taylor \& Francis}
}

@article{MK,
  title={Existence results for incompressible magnetoelasticity},
  author={Kru{\v{z}}{\'\i}k, Martin and Stefanelli, Ulisse and Zeman, Jan},
  journal={Discrete and Continuous Dynamical Systems},
  volume={35},
  number={6},
  pages={2615--2623},
  year={2014},
  publisher={Discrete and Continuous Dynamical Systems}
}

@article{Molchanova,
  title={Injectivity almost everywhere and mappings with finite distortion in nonlinear elasticity},
  author={Molchanova, Anastasia and Vodopyanov, Sergey},
  journal={Calculus of Variations and Partial Differential Equations},
  volume={59},
  number={1},
  pages={17},
  year={2020},
  publisher={Springer}
}

@article{Onninen,
  title={Regularity of the inverse of spatial mappings with finite distortion.},
  author={Onninen, Jani},
  journal={Calculus of Variations \& Partial Differential Equations},
  volume={26},
  number={3},
  year={2006}
}

@incollection{Rindler,
  author		= "Rindler, F.",
  title			= "Calculus of Variations",
  publisher		= "Springer",
  year			= "2018"
}

@article{Luskin,
  title={Existence of energy minimizers for magnetostrictive materials},
  author={Rybka, Piotr and Luskin, Mitchell},
  journal={SIAM Journal on Mathematical Analysis},
  volume={36},
  number={6},
  pages={2004--2019},
  year={2005},
  publisher={SIAM}
}

@incollection{Vaisala,
  author="V{\"a}is{\"a}l{\"a}, Jussi",
  title="Lectures on n-dimensional quasiconformal mappings",
  publisher="Springer",
  year="2006"
}

@incollection{brown1966magnetoelastic,
  title="Magnetoelastic interactions",
  author="Brown, William Fuller",
  publisher="Springer",
  year="1966"
}

@article{Nature,
  title={Soft fibers with magnetoelasticity for wearable electronics},
  author={Zhao, Xun and Zhou, Yihao and Xu, Jing and Chen, Guorui and Fang, Yunsheng and Tat, Trinny and Xiao, Xiao and Song, Yang and Li, Song and Chen, Jun},
  journal={Nature Communications},
  volume={12},
  number={1},
  pages={6755},
  year={2021},
  publisher={Nature Publishing Group UK London}
}

@article{rogers1986steady,
  title={Steady-state problems of nonlinear electro-magneto-thermo-elasticity},
  author={Rogers, Robert C and Antman, Stuart S},
  journal={Technical Summary Report Wisconsin Univ},
  year={1986}
}

@article{rogers1993existence,
  title={Existence results for large deformations of magnetostrictive materials},
  author={Rogers, Robert C},
  journal={Journal of Intelligent Material Systems and Structures},
  volume={4},
  number={4},
  pages={477--483},
  year={1993},
  publisher={Sage Publications Sage CA: Thousand Oaks, CA}
}

@article{rogers1988nonlocal,
  title={Nonlocal variational problems in nonlinear electromagneto-elastostatics},
  author={Rogers, Robert C},
  journal={SIAM Journal on Mathematical Analysis},
  volume={19},
  number={6},
  pages={1329--1347},
  year={1988},
  publisher={SIAM}
}

@article{ball1976convexity,
  title={Convexity conditions and existence theorems in nonlinear elasticity},
  author={Ball, John M},
  journal={Archive for Rational Mechanics and Analysis},
  volume={63},
  pages={337--403},
  year={1976},
  publisher={Springer}
}

@article{ball1981null,
  title={Null Lagrangians, weak continuity, and variational problems of arbitrary order},
  author={Ball, John M and Currie, John C and Olver, Peter J},
  journal={Journal of Functional Analysis},
  volume={41},
  number={2},
  pages={135--174},
  year={1981},
  publisher={Elsevier}
}

@book{ciarlet2021mathematical,
  title={Mathematical elasticity: Three-dimensional elasticity},
  author={Ciarlet, Philippe G},
  year={2021},
  publisher={SIAM}
}

@book{dacorogna2007direct,
  title={Direct methods in the calculus of variations},
  author={Dacorogna, Bernard},
  volume={78},
  year={2007},
  publisher={Springer Science \& Business Media}
}

@article{fonseca1995local,
  title={Local invertibility of Sobolev functions},
  author={Fonseca, Irene and Gangbo, Wilfrid},
  journal={SIAM Journal on Mathematical Analysis},
  volume={26},
  number={2},
  pages={280--304},
  year={1995},
  publisher={SIAM}
}

@phdthesis{Dutta-thesis,
    author = {Dutta, Shilpa},
    title = {Existence of minimizers in magnetoelasticity and a ferronematic model with a stray field energy},
    school = {University of Würzburg},
    year = 2026,
    url = {https://nbn-resolving.org/urn:nbn:de:bvb:20-opus-443335}
}

@book{evans2022partial,
  title={Partial differential equations},
  author={Evans, Lawrence C.},
  volume={19},
  year={2022},
  publisher={American Mathematical Society}
}

\end{document}